\newcommand{\supp}{\operatorname{supp}}
\renewcommand{\>}{\rangle}
\newcommand{\<}{\langle}
\newcommand{\IM}{\operatorname{\mathcal{M}}}
\newcommand{\LL}{\operatorname{\mathcal{L}}}
\newcommand{\IS}{\operatorname{\mathcal{S}}}
\newcommand{\CP}{\mathbb{C}\mathbb{P}}
\newcommand{\eps}{\epsilon}
\newcommand{\CR}{\bar{\partial}}
\newcommand{\PD}{\operatorname{PD}}
\newcommand{\loc}{\operatorname{loc}}
\newcommand{\CZ}{\operatorname{CZ}}
\newcommand{\HF}{\operatorname{HF}}
\newcommand{\del}{\partial}
\newcommand{\RS}{\IR \times S^1}
\newcommand{\IC}{\operatorname{\mathbb{C}}}
\newcommand{\IZ}{\operatorname{\mathbb{Z}}}
\newcommand{\IR}{\operatorname{\mathbb{R}}}
\newcommand{\IN}{\operatorname{\mathbb{N}}}
\newcommand{\IH}{\operatorname{\mathbb{H}}}
\newcommand{\IP}{\operatorname{\mathbb{P}}}
\renewcommand{\IS}{\operatorname{\mathbb{S}}}
\renewcommand{\LL}{\operatorname{\mathcal{L}}}
\newtheorem{theorem}{Theorem}[section]
\newtheorem{proposition}[theorem]{Proposition}
\newtheorem{definition}[theorem]{Definition}
\newtheorem{lemma}[theorem]{Lemma}
\newtheorem{remark}[theorem]{Remark}
\title{Hamiltonian Floer theory for\\ nonlinear Schr\"odinger equations\\ and the small divisor problem}
\author{Oliver Fabert}
\thanks{O. Fabert, VU Amsterdam, The Netherlands. Email: o.fabert@vu.nl}
\begin{document}
\maketitle

\begin{abstract}
We prove the existence of infinitely many time-periodic solutions of nonlinear Schr\"odinger equations using pseudo-holomorphic curve methods from Hamiltonian Floer theory. For the generalization of the Gromov-Floer compactness theorem to infinite dimensions, we show how to solve the arising small divisor problem by combining elliptic methods with results from the theory of diophantine approximations.   
\end{abstract}

\tableofcontents
\markboth{O. Fabert}{Floer theory for Hamiltonian PDE} 

\section{Hamiltonian partial differential equations}

Nonlinear Schr\"odinger equations play a very important role in mathematical physics and have applications in, e.g., solid
state physics, condensed matter physics, quantum chemistry, nonlinear optics, wave propagation, protein folding and the semiconductor industry. They are classical field equations describing multi-particle systems, where the nonlinearity models the interaction between different particles. Before studying nonlinear examples, recall that the well-known linear free Schr\"odinger equation, i.e., without exterior potential, is given by $$i\del_t u \,=\, - \Delta u,$$ where $u=u(t,x)\in\IC$ is a complex-valued function depending on time and space, $\del_t$ is the derivative with respect to the time $t\in\IR$ and $\Delta=\del_x^2$ denotes the Laplace operator with respect to the space coordinate $x$. Here and in what follows we restrict ourself to the case of one spatial dimension. \\

Nonlinear Schr\"odinger equations are important examples of \emph{Hamiltonian partial differential equations}, where we refer to \cite{K} for definitions, statements and further references. This means that they can be written in the form $\del_t u=X^H_t(u)$, where the Hamiltonian vector field $X^H_t$ is determined by the choice of a (time-dependent) Hamiltonian function $H=H_t$ and a linear symplectic form $\omega$. Here a bilinear form $\omega:\IH\times\IH\to\IR$ on a real Hilbert space $\IH$ is called symplectic if it is anti-symmetric and nondegenerate in the sense that the induced linear mapping $i_{\omega}:\IH\to\IH^*$ is an isomorphism. As in the finite-dimensional case it can be shown that for any symplectic form $\omega$ there exists a complex structure $J_0$ on $\IH$ such that $\omega$, $J_0$ and the real inner product $\langle\cdot,\cdot\rangle_{\IR}$ on $\IH$ are related via $\langle\cdot,\cdot\rangle_{\IR}=\omega(\cdot,J_0\cdot)$.  \\

In the case of nonlinear Schr\"odinger equations on the circle $S^1=\IR/2\pi\IZ$ one chooses the complex Hilbert space $\IH=L^2(S^1,\IC)$ of square-integrable complex-valued functions on the circle which naturally can be viewed as a real Hilbert space by identifying $\IC$ with $\IR^2$. The standard complex inner product $\<\cdot,\cdot\>_{\IC}$ is related to the standard real inner product $\<\cdot,\cdot\>_{\IR}$ and the standard symplectic form $\omega$ by $\<\cdot,\cdot\>_{\IC}=\<\cdot,\cdot\>_{\IR}+i\omega$ and the symplectic form is related to the real inner product via $\omega=\<J_0\cdot,\cdot\>_{\IR}$ with $J_0=i$ denoting the standard complex structure on $\IH$. Furthermore we denote the resulting $L^2$-norm by $|\cdot|=|\cdot|_2$, where we usually omit the subscript. In order to stress the relation with the finite-dimensional case of $\IR^{2n}=\IC^n$, note that, using the Fourier series expansion $u(x)=(2\pi)^{-1/2}\sum_{n=-\infty}^{\infty} \hat{u}(n)\cdot \exp(inx)$ and writing $\hat{u}(n)=q_n+ip_n$ for all $n\in\IZ$, it follows that the symplectic Hilbert space $L^2(S^1,\IC)$ can be identified with the space $\ell^2(\IC)=\ell^2(\IZ,\IC)$ of square-summable complex-valued series $\hat{u}:\IZ\to\IC$ equipped with the symplectic form $\omega=\sum_{n=-\infty}^{+\infty} dp_n\wedge dq_n$. We want to stress that classical dense subspaces like $\IH_{\delta}:=H^{\delta,2}(S^1,\IC)=\{u\in\IH: \sum_{n=-\infty}^{\infty} |\hat{u}(n)|^2\cdot |n|^{2\delta}<\infty\}$ for $\delta>0$ and hence also $C^{\infty}(S^1,\IC)=\IH_{\infty}:=\bigcap_{\delta>0}\IH_{\delta}$ are only weakly symplectic in the sense that $i_{\omega}:\IH_{\delta}\to\IH_{\delta}^*$ is only injective; in particular, real-valued functions on $\IH_\infty$ which are even smooth typically do not possess a Hamiltonian vector field. For this observe that $i_{\omega}$ restricts to an isomorphism between $\IH_{\delta}$ and $\IH_{-\delta}^*$ which is a proper subspace of $\IH_{\delta}^*$ for all $\delta>0$.  

\section{Nonlinear Schr\"odinger equations of convolution type}

While the symplectic form $\omega$ is nondegenerate on $L^2(S^1,\IC)$, the Hamiltonian $H_t=H^0$ given by $$H^0(u)\,=\,-\int_0^{2\pi}  \frac{|u_x(x)|^2}{2}\;dx\,=\,- \sum_{n=-\infty}^{+\infty} \frac{n^2}{2}|\hat{u}(n)|^2$$ is only well-defined on its dense subspace $H^{1,2}(S^1,\IC)$. While the resulting Hamiltonian vector field $X^0(u)=i\Delta u$ is only defined on $H^{2,2}(S^1,\IC)$, we can prove the following result about the corresponding flow $\phi^0_t$.

\begin{proposition}\label{free-NLS} After applying the Fourier transform, the flow of the free Schr\"odinger equation is given by $$\phi^0_t(\hat{u})(n)=\exp(itn^2)\cdot\hat{u}(n)\,\,\textrm{for all}\,\,n\in\IN.$$ For fixed time $t$ it preserves the $L^2$-norm and hence defines a linear symplectomorphism on the full symplectic Hilbert space $\IH=L^2(S^1,\IC)$, which restricts to a finite-dimensional linear symplectomorphism on every $\IC^{2k+1}:=\{u\in\IH:\,\hat{u}(n)=0\,\,\textrm{for all}\,\, |n|>k\}$. \end{proposition}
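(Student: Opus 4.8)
The plan is to reduce everything to a mode-by-mode computation after the Fourier transform; the only genuine subtlety is the functional-analytic bookkeeping caused by the fact that the vector field $X^0(u)=i\Delta u$ is merely defined on the dense subspace $H^{2,2}(S^1,\IC)$.

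First I would pass to Fourier coefficients, which diagonalizes the Laplacian, so that the single equation $\del_t u=X^0(u)$ becomes the countable family of decoupled linear ODEs on $\IC$, namely $\del_t\hat u(n)=in^2\hat u(n)$ for $n\in\IZ$, whose unique solutions are $\hat u(n)(t)=\exp(itn^2)\hat u(n)(0)$ (the sign of the exponent being the one forced by the chosen sign conventions for $\omega$ and $X^H$). This already establishes the stated formula on the dense domain of smooth functions, where all manipulations and all series involved are unproblematic.

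Next I would promote this to a genuine flow on the whole symplectic Hilbert space $\IH=L^2(S^1,\IC)$. For each fixed $t$ the prescription $\hat u\mapsto(\exp(itn^2)\hat u(n))_{n\in\IZ}$ multiplies the $n$-th Fourier coefficient by a unimodular number, so by Parseval it is a well-defined \emph{linear isometry} of $\ell^2(\IZ,\IC)\cong L^2(S^1,\IC)$; it is in fact unitary, with inverse obtained by replacing $t$ by $-t$, and $\{\phi^0_t\}_{t\in\IR}$ is a strongly continuous one-parameter group. That this group really is the flow of the free Schr\"odinger equation is then the assertion that its infinitesimal generator is the closure of the densely defined operator $u\mapsto X^0(u)$; equivalently, it is an instance of Stone's theorem, $\Delta$ being (essentially) self-adjoint. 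Making this last point precise --- that the formula above deserves to be called \emph{the} flow even though $X^0$ is only densely defined --- is, I expect, the one step requiring genuine care; the rest is a one-line computation.

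Finally, the symplectic and the finite-dimensional statements follow formally. Since $\phi^0_t$ multiplies each complex Fourier mode by a scalar, it is complex linear, i.e.\ it commutes with $J_0=i$; being in addition an isometry of $\langle\cdot,\cdot\rangle_{\IR}$, it preserves $\langle\cdot,\cdot\rangle_{\IC}=\langle\cdot,\cdot\rangle_{\IR}+i\omega$ and hence the symplectic form $\omega$. (Equivalently, on the $n$-th factor $\IC\cong\IR^2$ it acts as the rotation by the angle $tn^2$, which preserves $dp_n\wedge dq_n$.) Moreover the subspace $\IC^{2k+1}=\{u\in\IH:\hat u(n)=0\text{ for }|n|>k\}$ is invariant under $\phi^0_t$ because distinct Fourier modes are never mixed, it is finite-dimensional, and the restriction $\phi^0_t|_{\IC^{2k+1}}=\bigoplus_{|n|\le k}R_{tn^2}$ is a block-diagonal symplectic rotation; hence it is a finite-dimensional linear symplectomorphism, which completes the proof.
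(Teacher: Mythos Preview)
Your proposal is correct and follows essentially the same approach as the paper: diagonalize via the Fourier transform to obtain the decoupled ODEs $\del_t\hat u(n)=in^2\hat u(n)$, read off the explicit solution, and then deduce the isometry and symplectic properties from the fact that each mode is multiplied by a unimodular scalar. The paper's proof is in fact considerably terser than yours---it does not address the functional-analytic point about $X^0$ being only densely defined (no mention of Stone's theorem or strong continuity), so your additional care there goes beyond what the paper provides rather than diverging from it.
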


\begin{proof} In order to see this, observe that, after applying the Fourier transform, the symplectic vector field is given by $X^0(\hat{u})(n)=in^2\cdot\hat{u}(n)$. On the other hand, since in every frequency the corresponding flow $\phi^0_t$  multiplies the Fourier coefficient by a complex number of norm one, the claims follow. \end{proof}

In this paper we want to restrict ourselves to nonlinearities which can be approximated very well by finite-dimensional ones. More precisely, we want to restrict ourselves to the following modification of the classical nonlinear Schr\"odinger equation, see \cite{K}.

\begin{definition} A  \emph{nonlinear Schr\"odinger equation of convolution type} is a Hamiltonian PDE with Hamiltonian $H_t=H^0+F_t$ on the symplectic Hilbert space $\IH=L^2(S^1,\IC)$, where the Hamiltonian defining the nonlinearity is defined as $$F_t(u)\,:=\, - \int_0^{2\pi} \frac{1}{2} f(|(u*\psi)(x)|^2,x,t)\,dx$$ with $(u*\psi)(x)=\<u,\psi(x-\cdot)\>_{\IC}$, where $\psi\in C^{\infty}(S^1,\IR)$ is some fixed smoothing kernel, and $f$ denotes a smooth, real-valued function on $\IR^+\times S^1\times\IR$. \end{definition}

In other words, for the Hamiltonian $H_t$ we consider density functions of the form $\frac{1}{2}(|u_x|^2+\tilde{f}(u,x,t))$ with $\tilde{f}(u,x,t):=f(|(u*\psi)(x)|^2,x,t)$. Using 
\begin{eqnarray*} 
\<\nabla F_t(u), v\>_{\IR} &=& -\int_0^{2\pi}\frac{1}{2}\frac{d}{ds}|_{s=0} [\tilde{f}(u(x)+sv(x),x,t)-\tilde{f}(u(x),x,t)]\,dx\\ &=& -\int_0^{2\pi}\frac{1}{2}\del_1\tilde{f}(u(x),x,t)\cdot v(x)\,dx
\end{eqnarray*} it follows that the $L^2$-gradient of $F_t$ at $u\in\IH$ is given by $$\nabla F_t(u)(x)=-\frac{1}{2}\del_1\tilde{f}(u,x,t)=-\del_1  f(|u*\psi|^2,x,t)(u*\psi)*\psi$$ which in turn implies that the resulting nonlinear Schr\"odinger equation is given by $$i\del_t u \,=\, - \Delta u \;+\; \del_1  f(|u*\psi|^2,x,t)(u*\psi)*\psi,$$ where $\del_1 f$ means derivative with respect to the first coordinate and convolution is understood with respect to the space coordinate. Nonlinear Schr\"odinger equations of convolution type describe multi-particle systems with non-local interaction.\\

In order to see that these nonlinearities can indeed be approximated by finite-dimensional ones very well, define for the given convolution kernel $\psi$ with Fourier series expansion $\psi(x)=(2\pi)^{-1/2}\sum_{n=-\infty}^{+\infty}\hat{\psi}(n)\exp(inx)$ for each $k\in\IN$ the approximating kernel $$\psi^k(x)\,:=\,\frac{1}{\sqrt{2\pi}}\sum_{n=-k}^k \hat{\psi}(n)\exp(inx)$$ and define the resulting sequence of Hamiltonians $F^k_t$ by $$F^k_t(u)\;:=\;- \frac{1}{2}\int_0^{2\pi} f(|(u*\psi^k)(x)|^2,x,t)\;dx$$ for all $k\in\IN$.   

\begin{lemma}\label{approx} For each $k\in\IN$ the Hamiltonian flow $\phi^{F,k}_t$ of the Hamiltonian $F^k_t$ restricts to a finite-dimensional Hamiltonian flow on $\IC^{2k+1}\subset\IH$. Furthermore the sequence of time-dependent Hamiltonians $F^k_t$ converges with all derivatives to the original Hamiltonian $F_t$ as $k\to\infty$, uniformly on every bounded subset of $\IH$. In particular, the same holds true for the symplectic gradients $X^{F,k}$ and $X^F$. \end{lemma}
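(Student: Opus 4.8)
The plan is to establish the two assertions of Lemma~\ref{approx} separately, the first being essentially formal and the second being the quantitative heart of the matter.

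For the first assertion, I would compute the $L^2$-gradient of $F^k_t$ exactly as was done for $F_t$ above. Writing $\tilde{f}^k(u,x,t):=f(|(u*\psi^k)(x)|^2,x,t)$, the same differentiation under the integral sign gives $\nabla F^k_t(u)(x)=-\del_1 f(|u*\psi^k|^2,x,t)(u*\psi^k)*\psi^k$, so that the symplectic gradient is $X^{F,k}_t(u)=-i\,\nabla F^k_t(u)$. The key point is that $\psi^k$ has only finitely many nonzero Fourier coefficients, namely those with $|n|\le k$; hence for any $u\in\IH$ the convolution $u*\psi^k$ depends only on the Fourier modes $\hat u(n)$ with $|n|\le k$, and is again supported (in frequency) in $|n|\le k$. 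Taking a further convolution with $\psi^k$ and applying $i$ does not enlarge the frequency support, so $X^{F,k}_t(u)\in\IC^{2k+1}$, and moreover $X^{F,k}_t(u)$ depends only on the $\IC^{2k+1}$-component of $u$. Therefore $\IC^{2k+1}$ is an invariant subspace on which $X^{F,k}_t$ restricts to a smooth (finite-dimensional) vector field, and since $\IC^{2k+1}$ is a symplectic subspace the restricted flow $\phi^{F,k}_t$ is a finite-dimensional Hamiltonian flow, generated by the restriction of $F^k_t$.

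For the convergence assertion, the mechanism is that $\psi^k\to\psi$ in $C^\infty(S^1,\IR)$ — indeed with all derivatives, since $\psi$ is smooth its Fourier coefficients $\hat\psi(n)$ decay faster than any power of $|n|$, so $\|\psi-\psi^k\|_{C^m(S^1)}\to 0$ for every $m$, in fact super-polynomially in $k$. From this I would deduce that for $u$ ranging over a bounded set $B\subset\IH$, the functions $u*\psi^k$ converge to $u*\psi$ in $C^m(S^1)$ uniformly in $u\in B$, using the estimate $\|u*\eta\|_{C^m}\le C_m\,|u|_2\,\|\eta\|_{C^{m+1}}$ (or a similar Sobolev-type bound) applied to $\eta=\psi-\psi^k$. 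Since $f$ is smooth and the arguments $|(u*\psi^k)(x)|^2$ and $|(u*\psi)(x)|^2$ then range over a fixed compact subset of $\IR^+\times S^1\times\IR$ and converge uniformly together with all their $x$-derivatives, the integrands defining $F^k_t$ converge to that of $F_t$ uniformly on $B$, and the same holds after differentiating $F^k_t$ in $u$ any number of times — each such derivative is again an integral of a smooth expression in $u*\psi^k$ and its $x$-derivatives composed with derivatives of $f$. Passing to the symplectic gradients is immediate since $X^{F,k}=-i\,\nabla F^k$ and $X^F=-i\,\nabla F$, and multiplication by $i$ is an isometry.

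The main obstacle — and the only place requiring genuine care — is making the convergence statement precise with respect to the correct topology on the infinite-dimensional space: one must verify that the relevant operator norms of the higher derivatives $D^j F^k_t(u)$, viewed as multilinear forms on $\IH=L^2$, are controlled by $C^m$-norms of $\psi-\psi^k$ on a fixed bounded set, which is exactly what the smoothing property of convolution provides (each factor $v_i\in\IH$ enters only through $v_i*\psi^k$, a bounded $\IH\to C^m$ map). Once this bookkeeping is set up, uniform convergence on bounded sets follows from $\|\psi-\psi^k\|_{C^m}\to0$ together with the smoothness and local boundedness of $f$ and its derivatives; no small-divisor issue arises here, since the cutoff is in the convolution kernel rather than in the dynamics.
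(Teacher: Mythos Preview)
Your proposal is correct and follows essentially the same route as the paper: the Fourier-support argument for the invariance of $\IC^{2k+1}$ is identical, and the convergence argument proceeds via uniform convergence of $u*\psi^k\to u*\psi$ on bounded sets combined with Lipschitz/smoothness bounds for $f$. The only notable difference is that the paper gets by with the weaker $L^2$-convergence $|\psi-\psi^k|_2\to 0$ together with Young's inequality $|u*\psi-u*\psi^k|_\infty\le R\,|\psi-\psi^k|_2$, whereas you invoke the (stronger but equally available) $C^m$-convergence of $\psi^k$; your version has the advantage of making the control of the higher Fr\'echet derivatives in $u$ more explicit.
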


\begin{proof} For the first statement it suffices to observe that the symplectic gradient $X^{F,k}_t$ of $F^k_t:\IH\to\IR$ given by $$X^{F,k}_t(u)=- i \del_1 f(|(u*\psi^k)(x)|^2,x,t)(u*\psi^k)*\psi^k$$ has vanishing Fourier coefficients, $\widehat{X^{F,k}_t(u)}(n)=0$, for $|n|>k$, where we use that $\widehat{u*v}(n)=\hat{u}(n)\hat{v}(n)$ and $\widehat{\psi^k}(n)=0$ for $|n|>k$. In order to prove that $F^k_t$ converges with all derivatives to $F_t$ as $k\to\infty$ uniformly on bounded subsets, we first observe that in the $L^2$-norm $|\cdot|=|\cdot|_2$ on $\IH$ we have $|\psi-\psi^k|_2\to 0$ as $k\to\infty$. Defining $\tilde{f}:\IR^+\times S^1\times\IR\to\IR$ by $\tilde{f}(w,x,t)=f(|w|^2,x,t)$, it actually suffices to prove that, for every fixed $t$ and every fixed $\alpha\in\IN$, the $\alpha$.th derivative $x\mapsto \del_1^{\alpha}\tilde{f}((u*\psi^k)(x),x,t)$ converges to $x\mapsto \del_1^{\alpha}\tilde{f}((u*\psi)(x),x,t)$, uniformly in $x$ and $u\in\IH$ with $|u|_2\leq R$ for some fixed $R>0$. For this we observe that $u*\psi^k\to u*\psi$ as $k\to\infty$, uniform in the above sense, since for the supremum norm of $u*\psi - u*\psi^k$ we have by Young's inequality for convolutions $$|u*\psi - u*\psi^k|_{\infty}\leq R\cdot |\psi-\psi^k|_2.$$ Now the result follows using the Lipschitz inequality $$|\del_1^{\alpha}\tilde{f}(v,x,t)-\del_1^{\alpha}\tilde{f}(w,x,t)|\leq L_{\alpha}\cdot |v-w|,$$ with $L_{\alpha}$ denoting the $C^{\alpha+1}$-norm of $\tilde{f}(\cdot,\cdot,t):[0,R|\psi|_2]\times S^1\to\IR$. Note that here we additionally use that $|u*\psi|_{\infty}, |u*\psi^k|_{\infty}\leq R\cdot|\psi|_2$. \end{proof}

Since the smoothing kernel $\psi$ is assumed to be smooth, we actually know that $\psi^k$ converges exponentially fast to $\psi$, that is, $|\hat{\psi}(\pm n)|\cdot n^{\delta} \to 0$ as $n\to\infty$ for every $\delta\in\IN$. Using, in addition, that the Hamiltonian $F_{t+1}=F_t$ depends smoothly on $t$, we get  

\begin{lemma}\label{exp} After restricting to a bounded subset, we indeed we have that $F^k_t$ converges exponentially fast with all derivatives to the original Hamiltonian $F_t$ as $k\to\infty$. Moreover, expanding $\nabla F(u)(t,x)=\nabla F_t(u)(x)$ as a Fourier series with respect to both the space and time coordinates $x$, $t$, $$\nabla F(u)(t,x)\,=\,\frac{1}{\sqrt{2\pi}} \sum_{n,p=-\infty}^{+\infty}\widehat{\nabla F(u)}(n,p)\cdot\exp(inx)(i 2\pi pt), $$ then for every $\alpha,\delta\in\IN$ we have $$\widehat{\nabla F(u)}(n,p)\cdot |n|^{\delta}\cdot |p|^{\alpha}\,\to\, 0\,\,\textrm{as}\,\, |n|,|p|\to\infty,$$ uniformly on every bounded subset of $\IH$. \end{lemma}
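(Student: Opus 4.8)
The plan is to reduce both assertions to the single observation that the variable $u$ enters $F_t$ and its $L^2$-gradient $\nabla F_t(u)$ only through convolutions with the fixed smooth kernel $\psi$, so that all $x$- and $t$-regularity of these objects is inherited from $\psi$ and $f$, with constants controlled by $|u|_2$ alone. For the first assertion I would simply rerun the estimate from the proof of Lemma~\ref{approx}, now keeping track of the rate. Since $\psi$ is smooth, $|\hat\psi(n)|$ decays faster than any power of $|n|$, so
\[
|\psi-\psi^k|_2^2\;=\;\sum_{|n|>k}|\hat\psi(n)|^2
\]
tends to $0$ faster than any power of $1/k$, and hence so does $|u*\psi-u*\psi^k|_\infty\le|u|_2\,|\psi-\psi^k|_2$, uniformly for $|u|_2\le R$. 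Feeding this bound into the Lipschitz estimates for the functions $\del_1^\alpha\tilde f(\,\cdot\,,x,t)$ used there --- with the pertinent $C^{\alpha+1}$-constants now taken as suprema over the compact $t$-circle, which is legitimate because $f$, and hence $\tilde f$, is smooth and $1$-periodic in $t$ --- shows that $F^k_t\to F_t$ together with all its derivatives, in the Hilbert-space directions as well as in $t$, faster than any power of $1/k$ and uniformly on each bounded subset of $\IH$; the same then holds for the associated symplectic gradients, which differ from the $L^2$-gradients only by the fixed complex structure $J_0$.

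For the second assertion the key step is to show that, for each fixed $u$ with $|u|_2\le R$, the function
\[
g_u(x,t)\;:=\;\nabla F(u)(t,x)\;=\;-\,\del_1 f\!\bigl(|(u*\psi)(x)|^2,x,t\bigr)\cdot\bigl((u*\psi)*\psi\bigr)(x)
\]
is smooth on the two-torus $S^1\times(\IR/\IZ)$ with each $C^N$-norm bounded by a constant $C_{N,R}$ independent of $u$. Indeed $\del_x^j(u*\psi)=u*\del_x^j\psi$, so Young's inequality for convolutions gives $|\del_x^j(u*\psi)|_\infty\le R\,|\del_x^j\psi|_2$, and likewise a uniform bound for $|\del_x^j((u*\psi)*\psi)|_\infty$; thus $x\mapsto(u*\psi)(x)$ and $x\mapsto((u*\psi)*\psi)(x)$ are smooth in $x$ with all derivatives bounded uniformly for $|u|_2\le R$, while $|(u*\psi)(x)|^2$ stays in the fixed compact interval $[0,(R|\psi|_2)^2]$. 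Since $\del_1 f$ and all its partial derivatives are bounded on the compact set $[0,(R|\psi|_2)^2]\times S^1\times(\IR/\IZ)$ --- again by smoothness and $1$-periodicity in $t$ --- the chain and product rules express each mixed derivative $\del_x^b\del_t^a g_u$ as a fixed finite combination of these bounded quantities, which yields $\|g_u\|_{C^N}\le C_{N,R}$ for all $|u|_2\le R$.

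Finally I would invoke the standard Fourier estimate on the torus: integrating by parts $\delta+1$ times in $x$ and $\alpha+1$ times in $t$ gives, for $n,p\ne0$,
\[
\bigl|\widehat{g_u}(n,p)\bigr|\;\le\;\frac{C\,\|g_u\|_{C^{\delta+\alpha+2}}}{|n|^{\delta+1}\,|p|^{\alpha+1}}\;\le\;\frac{C\,C_{\delta+\alpha+2,R}}{|n|^{\delta+1}\,|p|^{\alpha+1}},
\]
together with the analogous one-variable bounds when $n=0$ or $p=0$. Multiplying by $|n|^\delta|p|^\alpha$ and letting $|n|,|p|\to\infty$ shows that $\widehat{\nabla F(u)}(n,p)\cdot|n|^\delta|p|^\alpha\to0$ with a rate and constants depending only on $\alpha,\delta,R$, hence uniformly on $\{u:|u|_2\le R\}$, which is the assertion. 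I do not expect a genuine obstacle here: the only thing requiring care is the bookkeeping --- in the first part, making sure that ``all derivatives'' also covers the Fr\'echet derivatives in the Hilbert-space directions and that all Lipschitz- and $C^N$-constants are taken uniformly in $t$, and, in the second part, verifying that the $C^N$-bounds on $g_u$ are genuinely independent of $u$, which is exactly what Young's inequality for the fixed kernel $\psi$ provides.
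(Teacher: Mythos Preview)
Your proposal is correct and follows exactly the route the paper indicates: the paper does not give a separate proof of this lemma but simply records, in the sentence preceding it, that smoothness of $\psi$ gives $|\hat\psi(\pm n)|\cdot n^\delta\to 0$ for all $\delta$, and that smooth $1$-periodic $t$-dependence of $F_t$ then yields the claim. Your argument is precisely the natural fleshing-out of this hint---rerunning the Lipschitz estimate from Lemma~\ref{approx} with the superpolynomial rate $|\psi-\psi^k|_2$, and then bounding the $C^N$-norms of $(t,x)\mapsto\nabla F_t(u)(x)$ on the torus uniformly in $|u|_2\le R$ via Young's inequality and the chain rule to obtain the standard Fourier decay by integration by parts.
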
 

We collect some further important observations about these class of equations in the following  

\begin{proposition} \label{projective} For every nonlinear Schr\"odinger equation of convolution type the resulting flow exists for all times and is given by the composition $\phi_t=\phi^0_t\circ\phi^G_t$ of the flow $\phi^0_t$ of the free Schr\"odinger equation with the flow of the smooth time-dependent Hamiltonian $G_t=F_t\circ\phi^0_t$. Furthermore, since $\phi_t$ preserves the Hilbert space norm on $\IH$, it descends to a symplectic flow on the projective Hilbert space $\IP(\IH)$ equipped with the Fubini-Study form. \end{proposition}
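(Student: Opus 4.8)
The plan is to prove Proposition~\ref{projective} by passing to the \emph{interaction picture}. By Proposition~\ref{free-NLS} the free flow $\phi^0_t$ is a \emph{linear} symplectomorphism of $(\IH,\omega)$, so the pulled-back Hamiltonian $G_t:=F_t\circ\phi^0_t$ has symplectic gradient $X^{G_t}=(\phi^0_t)^*X^{F_t}=\phi^0_{-t}\circ X^{F_t}\circ\phi^0_t$, where I have used that $\phi^0_t$ is linear, so $d\phi^0_t=\phi^0_t$ and $(\phi^0_t)^{-1}=\phi^0_{-t}$. Granting for the moment that $G_t$ has a well-behaved flow $\phi^G_t$, I would then \emph{define} $\phi_t:=\phi^0_t\circ\phi^G_t$ and verify, by differentiating this composition and inserting the free flow equation $\del_t\phi^0_t=X^0\circ\phi^0_t$ (valid on the domain $H^{2,2}(S^1,\IC)$ of $X^0$), that $\del_t(\phi_t u_0)=X^0(\phi_t u_0)+X^{F_t}(\phi_t u_0)$ for $u_0\in H^{2,2}(S^1,\IC)$ --- i.e.\ that $\phi_t$ does solve the equation with Hamiltonian $H_t=H^0+F_t$. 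This reduces everything to understanding $G_t$ and its flow, and it is precisely here that the convolution hypothesis enters: although $X^0$ is only densely defined and $t\mapsto\phi^0_t$ is merely strongly (not differentiably) continuous on $\IH$, the composition $G_t=F_t\circ\phi^0_t$ turns out to be a genuine smooth object on all of $\IH$.

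The first, and main, technical step is to show that $X^{G_t}$ is a globally defined vector field on $\IH$, jointly smooth in $(t,u)\in\IR\times\IH$, taking values in $C^\infty(S^1,\IC)$, and bounded together with all its derivatives on bounded subsets, uniformly in $t$. For $X^{F_t}(v)=-i\,\del_1 f(|v*\psi|^2,\cdot,t)\,(v*\psi)*\psi$ this is essentially already contained in the proofs of Lemmas~\ref{approx} and~\ref{exp}: by Young's inequality $v*\psi\in C^\infty(S^1,\IC)$ with all its Sobolev norms controlled by $|v|_2$ and the Sobolev norms of $\psi$, while $|v*\psi|_\infty\le|v|_2\,|\psi|_2$ keeps $\del_1 f$ and its derivatives bounded on bounded sets. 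The new point is the $t$-dependence of $X^{G_t}(u)=-i\,\phi^0_{-t}\big(\nabla F_t(\phi^0_t u)\big)$: when expanded in Fourier modes the only implicit $t$-dependence enters through the unimodular factors $e^{\pm itn^2}$ of $\phi^0_{\pm t}$, weighted by coefficients carrying a factor $\hat\psi(n)$, which by smoothness of $\psi$ decays faster than any polynomial (Lemma~\ref{exp}); hence these series may be differentiated term by term in $t$, each $t$-derivative only producing polynomial factors in $n$ that are absorbed by the decay of $\hat\psi$. I expect this verification --- extracting smoothness and uniform bounds in $t$ from the smoothing kernel --- to be the real substance of the proof; everything else is formal.

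Once $X^{G_t}$ is smooth and Lipschitz on bounded sets, Picard--Lindel\"of in the Banach space $\IH$ yields a unique local flow $\phi^G_t$, and for completeness I would use conservation of the norm. The function $N(u):=\tfrac12|u|^2$ generates the global phase rotation $R_s(u)=e^{-is}u$, a symplectomorphism of $(\IH,\omega)$, and $F_t$ --- hence also $G_t=F_t\circ\phi^0_t$, since $\phi^0_t$ commutes with scalar multiplication --- is $R_s$-invariant; therefore $\{G_t,N\}=0$, so $N$ is constant along $\phi^G_t$ and $t\mapsto\phi^G_t u_0$ stays on the sphere $\{|u|=|u_0|\}$. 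On that sphere $X^{G_t}$ is uniformly bounded, in all $H^{k,2}$-norms simultaneously (using more derivatives of $\psi$), so the standard continuation argument gives global existence of $\phi^G_t$ and shows in addition that it preserves each $H^{k,2}(S^1,\IC)$, which is what was needed in the first paragraph. Consequently $\phi_t=\phi^0_t\circ\phi^G_t$ exists for all $t$, and since $\phi^0_t$ is an $L^2$-isometry, $|\phi_t u|=|\phi^G_t u|=|u|$. Finally, $H^0$, $F_t$, and hence $H_t$ are $R_s$-invariant, so the equivariance of Hamiltonian vector fields under symplectomorphisms fixing the Hamiltonian forces $\phi^0_t$, $\phi^G_t$, and hence $\phi_t$, to commute with the $U(1)$-action $R_s$; thus $\phi_t$ is a norm-preserving, $U(1)$-equivariant symplectomorphism of $(\IH,\omega)$ and descends to the Marsden--Weinstein reduction $N^{-1}(\tfrac12)/U(1)=S(\IH)/U(1)=\IP(\IH)$, whose reduced symplectic form is the Fubini--Study form, as asserted.
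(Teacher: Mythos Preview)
Your proof is correct and follows the same overall architecture as the paper: pass to the interaction picture $\phi_t=\phi^0_t\circ\phi^G_t$, establish that $G_t$ is globally smooth (including in $t$) thanks to the smoothing kernel, then deduce norm preservation and descend to $\IP(\IH)$. The one genuine difference is the norm-preservation step. The paper proves $\langle u,X^F(u)\rangle_{\IR}=0$ by a direct computation, splitting $u=u_R+iu_I$ and using self-adjointness of convolution with the real kernel $\psi$ to show the cross-terms cancel. You instead observe that $F_t$ depends only on $|u*\psi|^2$ and is therefore invariant under the global phase rotation $R_s(u)=e^{-is}u$, so Noether gives $\{G_t,N\}=0$ at once. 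Your argument is cleaner and identifies the underlying symmetry; the paper's computation is exactly the infinitesimal verification of that invariance. For the $t$-smoothness of $G_t$ the paper uses the identity $\phi^0_t(u)*\psi=\phi^0_t(u*\psi)$ to move the free flow onto the smooth function $u*\psi\in C^\infty(S^1,\IC)$, where $t\mapsto\phi^0_t$ is genuinely differentiable; this is a slightly slicker repackaging of your Fourier-mode argument that the factors $e^{\pm itn^2}$ are tamed by the decay of $\hat\psi(n)$.
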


\begin{proof} We first observe that the composition $\phi^0_t\circ\phi^G_t$ is generated by $H^0+G_t\circ\phi^0_{-t}=H^0+F_t=H_t$. Since the claims of the proposition immediately follow when $F_t=0$, it suffices to focus on the flow of $G_t$, or equivalently on the flow of $F_t$. From lemma \ref{approx} it immediately follows that $F$ is smooth; since $\phi^0_t(u)*\psi=\phi^0_t(u*\psi)$ and the free flow $\phi^0_t$ is infinitely often differentiable with respect to time and space coordinates on $C^{\infty}(S^1,\IC)\subset L^2(S^1,\IC)$, the same continues to hold for $G$. Taken additionally proposition \ref{free-NLS} into account, the claims follow after showing that the Hamiltonian flow of $F$ preserves the Hamiltonian $K:\IH\to\IR$ given by (half the square of) the Hilbert space norm, $K(u)=|u|^2/2$: Since $$X^K(F)=-X^F(K)=\omega(X^F,X^K)=\<X^F,\nabla K\>_{\IR}$$ with $\nabla K(u)=u$, it suffices to show that, at every point $u\in\IH$, the symplectic gradient $X^F(u)=X^F_t(u)=i\del_1 f(|(u*\psi)(x)|^2,x,t)(u*\psi)*\psi\in\IH$ is perpendicular to $u$ with respect to the real inner product on $\IH=L^2(S^1,\IR^2)$. First observe that the statement is immediately clear if $u$ is truely real (that is, $u(x)\in\IR\subset\IC$ for all $x\in S^1$) or truely imaginary, as in this case $X^F(u)$ is truely imaginary, or truely real, respectively. For the general case write $u=u_R+i u_I$ and $X^F(u)=X^F_R(u)+i X^F_I(u)$ with real-valued functions $u_R,u_I,X^F_R,X^F_I$. In this case we can show that \begin{eqnarray*} &&\<u_R,\del_1f(|(u*\psi)(x)|^2,x,t)(u_I*\psi)*\psi\>\\&&=\<u_R*\psi,\del_1 f(|(u*\psi)(x)|^2,x,t)(u_I*\psi)\>\\&&=\<\del_1 f(|(u*\psi)(x)|^2,x,t)(u_R*\psi),u_I*\psi\>\\&&=\<\del_1 f(|(u*\psi)(x)|^2,x,t)(u_R*\psi)*\psi,u_I\>, \end{eqnarray*}
which in turn proves that $$\<u,X^F(u)\>\,=\,\<u_I,X^F_R(u)\>\;-\;\<u_R,X^F_I(u)\>\,=\,0,$$ finishing the proof of the proposition. \end{proof}

Here we use the canonical normalization of the Fubini-Study form $\omega$ on $\IP(\IH)$ inherited by viewing $\IP(\IH)$ as the quotient of the unit sphere $\IS(\IH)=\{u\in\IH:|u|_2=1\}$ under the $U(1)$-action; in particular, the symplectic area of every holomorphic sphere in $\IP(\IH)$ is $\pi$. Note that studying the Schr\"odinger equation on $\IP(\IH)$ in place of $\IH$ is also natural from the view point of quantum physics. 

\begin{remark} \label{intrinsic}
The Hamiltonians $F_t$ can be viewed as real-valued functions defined on the weakly symplectic Frechet manifold $(\IP(\IH_\infty),\omega)$ with $\IH_\infty=C^\infty(S^1,\IC)$, which are smooth and have a Hamiltonian vector field $X^F_t(u)\in T_u\IP(\IH_\infty)\subset T_u\IP(\IH)$ in every point  $u\in\IP(\IH_\infty)$ which is \emph{uniformly} bounded (including derivatives) with respect to the canonical sequence of Sobolev metrics defining the Frechet topology on $\IP(\IH_\infty)$. \end{remark}

\section{Statement of the main theorem}

From now on let us assume that the nonlinear term in the Schr\"odinger equation is one-periodic in time, that is, $f(|(u*\psi)(x)|^2,x,t+1)=f(|(u*\psi)(x)|^2,x,t)$. Then it follows that every nonlinear Schr\"odinger equation defines a flow $\phi_t=\phi^H_t$ on the projective Hilbert space $\IP(\IH)$ where the underlying Hamiltonian is one-periodic in time, $H_{t+1}=H^0+F_{t+1}=H^0+F_t=H_t$. \\

In the case of time-one-periodic smooth Hamiltonians on finite-dimensional projective spaces $\CP^n=\IP(\IC^{n+1})$ we have the following famous   

\begin{theorem} (\cite{Fo},\cite{Fl}) The time-one map of a Hamiltonian flow on $\CP^n$ always has at least $n+1$ fixed points, that is, the degenerate version of the famous Arnold conjecture holds. \end{theorem}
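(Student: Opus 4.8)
The plan is to realize the fixed points of the time-one map $\phi=\phi^H_1$ as the generators of a chain complex whose homology is forced to be large, namely \emph{Hamiltonian Floer homology}. Assume first that $\phi$ is nondegenerate, so that $1$ is not an eigenvalue of $d\phi$ at any fixed point. Since $\pi_1(\CP^n)=0$, every $1$-periodic orbit of the time-dependent Hamiltonian vector field $X^H_t$ is contractible, and these orbits are in bijection with $\Fix(\phi)$. For a generic time-dependent almost complex structure $J_t$ compatible with the Fubini--Study form, the moduli space of finite-energy solutions $u\colon\IR\times S^1\to\CP^n$ of the Floer equation $\del_s u+J_t(u)\bigl(\del_t u-X^H_t(u)\bigr)=0$ asymptotic to two such orbits, taken modulo translation in the $\IR$-variable, is a smooth manifold of the expected dimension; monotonicity of $\CP^n$ (its first Chern class is a positive multiple of $[\omega]$ on $\pi_2$, so that every nonconstant holomorphic sphere carries positive Chern number) rules out sphere bubbling in the relevant dimensions, whence its zero- and one-dimensional parts are compact up to breaking. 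A signed count of the isolated trajectories defines a differential $\del$, with $\del^2=0$, on the $\IZ/2(n+1)$-graded $\Lambda$-vector space $\CF_*(H)$ spanned by the $1$-periodic orbits, where $\Lambda$ denotes the Novikov field, and the resulting Floer homology $\HF_*(H;\Lambda)$ is by construction the homology of a complex with exactly $\#\Fix(\phi)$ generators.

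The second step is the computation $\HF_*(H;\Lambda)\cong\H_*(\CP^n;\Lambda)$. This follows from continuation invariance of Floer homology together with the Piunikhin--Salamon--Schwarz isomorphism, or from Floer's original comparison of $\HF_*$ for a $C^2$-small autonomous Morse Hamiltonian with the corresponding Morse homology, corrected by a count of low-energy holomorphic spheres. Either way $\HF_*(H;\Lambda)$ depends on neither $H$ nor $J$ and equals the singular homology of $\CP^n$ over $\Lambda$, a $\Lambda$-vector space of total dimension $n+1$ with one generator in each even degree modulo $2(n+1)$. Since a $\Lambda$-vector space of dimension $n+1$ cannot be the homology of a complex with fewer than $n+1$ generators, we obtain $\#\Fix(\phi)\ge n+1$ whenever $\phi$ is nondegenerate.

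It remains to remove the nondegeneracy hypothesis, which is the delicate part. Assume $\phi$ has only finitely many, hence isolated, fixed points, each corresponding to an isolated contractible $1$-periodic orbit of $X^H$. One now runs a Lusternik--Schnirelmann-type argument inside Floer theory: Floer homology carries a module structure over the quantum cohomology ring $\QH^*(\CP^n;\Lambda)=\Lambda[x]/(x^{n+1}-q)$, whose quantum cup-length equals $n$ since $1,x,\dots,x^n$ are linearly independent with $x^n\ne 0$. The associated spectral invariants $a\mapsto c(a,H)$ are elements of the action spectrum of $\mathcal A_H$, that is, action values of contractible $1$-periodic orbits; they are monotone and sub-additive under the quantum product, and a coincidence $c(x^k,H)=c(x^{k+1},H)$ forces the corresponding critical level of $\mathcal A_H$ to support additional orbits. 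Feeding the $n+1$ classes $1,x,\dots,x^n$ into this machinery yields $\#\Fix(\phi)=\#\{\text{contractible }1\text{-periodic orbits of }X^H\}\ge n+1$, the Floer-theoretic incarnation of the classical bound $\#\Crit(f)\ge\operatorname{cl}(M)+1$; this reproves the theorem of Fortune and Floer. The main obstacle, here as in every instance of the degenerate Arnold conjecture, is exactly this last step: controlling Floer homology --- equivalently, the minimax critical values of the action functional --- in the presence of degenerate periodic orbits; the nondegenerate estimate is by comparison essentially a dimension count.
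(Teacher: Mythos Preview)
The paper does not prove this theorem; it is quoted as a known result with citations to Fortune and Floer, and serves purely as motivation for the infinite-dimensional analogue that is the paper's actual subject. Your sketch is a correct outline of the standard Floer-theoretic proof, so there is no proof in the paper to compare against directly.

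That said, the paper does invoke the degenerate case later, in the proof of Proposition~\ref{finite-dim} (and again at the end of the proof of the main theorem), and there it appeals to Floer's original cup-action argument rather than to spectral invariants: the cup product with $\PD[\CP^{2k-1}]\in H^2(\CP^{2k})$ gives an isomorphism $\HF^*\to\HF^{*+2}$, and the Floer cylinders realizing this action --- those intersecting a fixed pseudocycle representing $[\CP^{2k-1}]$ --- survive the degenerate limit $\nu\to 0$ and hence distinguish the fixed points. Your spectral-invariant formulation, essentially Schwarz's in \cite{Sch}, is a more modern packaging of the same Lusternik--Schnirelmann mechanism and yields the same bound; the cup-action version is closer to Floer's original paper \cite{Fl} and is the one the paper actually exploits downstream. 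One small wording issue: in $\QH^*(\CP^n;\Lambda)=\Lambda[x]/(x^{n+1}-q)$ the quantum cup-length in the naive sense is infinite since $x^{n+1}=q$ is invertible; what makes the argument work is the action filtration (equivalently, the grading over the Novikov ring), which is implicit in your spectral-invariant step but worth saying explicitly.
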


Viewing the Hamiltonian flow $\phi_t$ on $\IP(\IH)$ defined by the nonlinear Schr\"odinger equation of convolution type as an infinite-dimensional generalization, it is natural to ask whether an analogue of the Arnold conjecture also holds in this infinite dimensional context, establishing the existence of infinitely many fixed points of the time-one map. \\

But first, in order to show that the generalization to infinite dimensions is not trivial and we can only expect it to hold after imposing restrictions, we first give the following counterexample.

\begin{proposition} There exists a smooth Hamiltonian function on $\IP(\IH)$ whose time-one map has no fixed points at all.  \end{proposition}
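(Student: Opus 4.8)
The plan is to exhibit an explicit quadratic Hamiltonian on $\IH$ whose flow is a one-parameter group of unitary operators, and hence descends to a Hamiltonian flow on $\IP(\IH)$, chosen so that its time-one unitary has no eigenvectors at all. Fix a non-constant real-analytic function $g\in C^\infty(S^1,\IR)$, say $g(x)=\cos x$, and set $H(u)=\tfrac12\int_0^{2\pi}g(x)\,|u(x)|^2\,dx=\tfrac12\langle M_g u,u\rangle_{\IR}$, where $M_g$ is the operator of multiplication by $g$. Since $g$ is bounded, $M_g$ is a bounded, complex-linear, self-adjoint operator on $\IH$, so $H$ is a genuine real-analytic function on all of $\IH$ --- in contrast to $H^0$, which involves the unbounded operator $-\Delta$. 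Its Hamiltonian vector field with respect to $\omega=\langle J_0\cdot,\cdot\rangle_{\IR}$ is the bounded linear field $X^H(u)=-J_0 M_g u$ (equivalently $u\mapsto -iM_g u$), so the flow is the unitary one-parameter group $\phi_t(u)(x)=\exp(-it\,g(x))\,u(x)$, which exists for all $t\in\IR$.

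Next I would pass to $\IP(\IH)$. Because $M_g$ is complex-linear, $H(e^{i\theta}u)=H(u)$, so the restriction of $H$ to the unit sphere $\IS(\IH)$ is smooth and $U(1)$-invariant and descends to a smooth function $\bar H$ on $\IP(\IH)$; moreover $H$ Poisson-commutes with $K(u)=|u|^2/2$ (here one uses that $g$ is real-valued), so $\phi_t$ preserves $\IS(\IH)$, commutes with the $U(1)$-action, and descends to the Hamiltonian flow of $\bar H$ with respect to the Fubini--Study form. This is the standard Hamiltonian-reduction picture for quadratic Hamiltonians --- the same mechanism by which quadratic Hamiltonians on $\IC^{n+1}$ induce Hamiltonians on $\CP^n$ --- and making it precise is the only step of the argument that requires a line of routine verification. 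A point $[u]\in\IP(\IH)$ is then fixed by the descended time-one map if and only if $\phi_1(u)=\lambda u$ for some $\lambda\in\IC$ with $|\lambda|=1$, i.e.\ if and only if $u$ is an eigenvector of the unitary operator of multiplication by $e^{-ig(\cdot)}$.

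Finally comes the actual point: such an eigenvector would be a nonzero $u\in\IH$ with $e^{-ig(x)}u(x)=\lambda u(x)$ for almost every $x$, and hence would be supported in the level set $\{x\in S^1:\, g(x)\in -\arg\lambda+2\pi\IZ\}$; but since $g$ is real-analytic and non-constant, every level set of $g$ is finite, so $u=0$, a contradiction. Therefore the time-one map of the Hamiltonian flow of $\bar H$ has no fixed point in $\IP(\IH)$. Besides the descent to $\IP(\IH)$, which is routine, there is really no obstacle here; the content is conceptual, namely that the finite-dimensional input to theorems such as the one quoted above --- every unitary of $\IC^{n+1}$ has an eigenvector --- fails completely in infinite dimensions, where multiplication by a generic function has purely continuous spectrum and thus no eigenvectors at all.
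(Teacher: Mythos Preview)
Your proof is correct and follows essentially the same approach as the paper: both use the quadratic Hamiltonian $u\mapsto\tfrac12\int_0^{2\pi}V(x)|u(x)|^2\,dx$ (the paper's $V$ is your $g$), compute its flow as pointwise multiplication by $e^{\pm itV(x)}$, observe that it descends to $\IP(\IH)$, and conclude that a fixed point would have to be supported on a measure-zero level set of $V$. The only cosmetic difference is that the paper appeals to a generic choice of $V$ whereas you make the slightly sharper move of taking $g$ real-analytic and non-constant (e.g.\ $g=\cos$) so that level sets are finite.
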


\begin{proof} The function $L$ defined by $$L:\IH\to\IR,\,\,L(u) \,:=\, \int_0^{2\pi} V(x) \frac{|u(x)|^2}{2}\;dx$$ decends to a function on the symplectic quotient $\IP(\IH)=\IS(\IH)/S^1$, since its flow map is given by $(\phi^L_t(u))(x) = \exp(it V(x))\cdot u(x)$ and hence preserves the $L^2$-norm. In the same way it can be seen that $u\in\IS(\IH)$ is a fixed point of $\phi^L_1$ on $\IP(\IH)$ if and only if there exists some $a\in\IR$ such that for all $x\in S^1$ we have $\exp(iV(x))\cdot u(x)=\exp(ia)\cdot u(x)$ and hence either $(V(x)-a)/2\pi\in\IZ$ or $u(x)=0$. For a generic choice of the function $V:S^1\to\IR$ it follows that $u(x)=0$ almost everywhere and hence $u=0\not\in\IS(\IH)$, resulting in the fact that its time-one map on $\IP(\IH)$ has no fixed points at all. \end{proof}

Like for the linear Schr\"odinger equation, in our proof the appearance of the Laplace term in the nonlinear Schr\"odinger equations turns out to be essential to find infinitely many fixed points. Before we turn to the general case, we first have a look at the free Schr\"odinger equation where $F_t=0$. The proof of the following proposition is an easy exercise.

\begin{proposition} After passing to the projectivization, the time-one flow map $\phi^0=\phi^0_1$ of the free Schr\"odinger equation on $\IP(\IH)$ has infinitely many different fixed points $u_n^0$ given by the complex oscillations, $$u^0_n: S^1\to \IC,\,u_n^0(x)=\frac{1}{\sqrt{2\pi}}\exp(inx)\,\,\textrm{for every}\,\,n\in\IN.$$ \end{proposition}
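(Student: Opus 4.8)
The plan is to deduce everything from the explicit diagonal form of the free flow established in Proposition~\ref{free-NLS}, so there is really no analytic work to do. First I would compute the Fourier transform of the complex oscillation: since $u_n^0(x) = (2\pi)^{-1/2}\exp(inx)$, its Fourier coefficients are $\widehat{u_n^0}(m) = \delta_{mn}$, so $u_n^0$ spans the one-dimensional subspace $\IC\cdot u_n^0$, which is contained in $\IC^{2|n|+1}\subset\IH$. On this subspace Proposition~\ref{free-NLS} gives $\phi^0_t(\hat u)(m) = \exp(itm^2)\hat u(m)$, and evaluating at the single nonzero frequency $m = n$ yields
$$\phi^0_t(u_n^0) \,=\, \exp(itn^2)\cdot u_n^0 \quad\text{for all } t,$$
so in particular $\phi^0_1(u_n^0) = \exp(in^2)\, u_n^0$ is a unit-modulus complex multiple of $u_n^0$.

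Next I would invoke the description of $\IP(\IH)$ as $\IS(\IH)/U(1)$ used above: the descended map $\phi^0 = \phi^0_1$ sends the class $[u_n^0]$ to $[\exp(in^2)\, u_n^0] = [u_n^0]$, so $[u_n^0]$ is a fixed point of $\phi^0$ for every $n\in\IN$. Finally, to see that these are genuinely infinitely many distinct fixed points, I would note that $\{u_n^0\}_{n\in\IN}$ is an orthonormal family in $\IH$, hence no two of its members are complex-proportional and they represent pairwise distinct points of $\IP(\IH)$. This completes the argument.

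There is essentially no obstacle here; the only point requiring any care is the bookkeeping of the $U(1)$-quotient when passing from $\IH$ to $\IP(\IH)$, i.e. remembering that $\phi^0_1$ need only fix $u_n^0$ up to a phase. If one wanted more than the stated existence, one could also identify the full fixed locus: $[u]$ is fixed by $\phi^0$ iff $\exp(in^2)$ is constant along $\supp\widehat{u}$, and since $n^2 - m^2 = (n-m)(n+m)\in 2\pi\IZ$ forces $n = \pm m$ by the irrationality of $\pi$, the fixed-point set is precisely the union of the projective lines spanned by $u_n^0$ and $u_{-n}^0$; but this refinement is not needed for the proposition.
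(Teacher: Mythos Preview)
Your argument is correct and is precisely the direct verification the paper has in mind: the paper does not spell out a proof at all, remarking only that it is ``an easy exercise,'' and your computation via Proposition~\ref{free-NLS} together with the $U(1)$-quotient description of $\IP(\IH)$ is exactly that exercise. The supplementary identification of the full fixed locus is also correct and, as you note, not needed for the statement.
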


From now on let $F_t$, $G_t=F_t\circ\phi^0_t$ and $\phi^0_t$, $\phi_t=\phi^0_t\circ\phi^G_t$ denote the corresponding functions and flows on the projective Hilbert space $\IP(\IH)=\IS(\IH)/S^1$. By generalizing Floer theory to the case of infinite-dimensional symplectic manifolds, in this paper we prove the following infinite-dimensional version of the Arnold conjecture. Recall that the Hofer norm of the time-periodic Hamiltonian $F_t$ is defined as $$|||F|||\,:=\,\int_0^1 (\max F_t - \min F_t)\;dt.$$

\begin{theorem} Assume that, after descending to the projectivization, the Hofer norm $|||F|||$ of the Hamiltonian defining the nonlinearity is smaller than $\pi/2$. Then for every fixed point $u^0_n$, $n\in\IN$ of the time-one map $\phi^0_1$ of the free Schr\"odinger equation there exists a fixed point $u^1_n$ of the time-one map $\phi_1$ of the given nonlinear Schr\"odinger equation of convolution type, and a Floer curve $\tilde{u}_n:\IR\times\IR\to\IP(L^2(S^1,\IC))$ which connects $u^0_n$ and $u^1_n$. Furthermore all fixed points $u^1_n$, $n\in\IN$ of $\phi_1:\IP(L^2(S^1,\IC))\to\IP(L^2(S^1,\IC))$ are different. \end{theorem}

We first explain the statement about the existence of a Floer curve connecting the fixed point $u^0_n$ of the free Schr\"odinger equation with a fixed point $u^1_n$ of the given Schr\"odinger equation of convolution type, which we view as a path $u^1_n:\IR\to\IP(\IH)$ with $u^1_n(t+1)=\phi^0_{-1}(u^1_n(t))$ and $\del_t u^1_n=X^G_t(u^1_n)$. With this we mean a smooth map $\tilde{u}=\tilde{u}_n:\IR\times\IR\to\IP(\IH)$ with $\tilde{u}(\cdot,t+1)=\phi^0_{-1}(\tilde{u}(\cdot,t))$ satisfying the Floer equation $$0\,=\,\CR\tilde{u}\;+\;\varphi(s)\cdot\nabla G_t(\tilde{u}),$$ where $\CR=\del_s+i\del_t$ denotes the standard Cauchy-Riemann operator and $\varphi$ is a smooth cut-off function with $\varphi(s)=0$ for $s\leq -1$ and $\varphi(s)=1$ for $s\geq 0$ and slope $0\leq\varphi'(s)\leq 2$. It connects $u^0_n$ and $u^1_n$ in the sense that there exist two sequences $(s_{\gamma}^{\pm})$ of real numbers, $s_{\gamma}^{\pm}\to\pm\infty$ with $\tilde{u}_n(s_{\gamma}^-,\cdot)\to u^0_n$,  $\tilde{u}_n(s_{\gamma}^+,\cdot)\to u^1_n$ as $\gamma\to\infty$ in the $C^0$-sense. Note that we cannot assume that $\tilde{u}_n(s,\cdot)\to u^1_n$ as $s\to\infty$, since we do \emph{not} want to assume that the nonlinearity is generic in the sense that all orbits are isolated. \\

Note that every fixed point $u^1_n\in\IP(L^2(S^1,\IC))$ corresponds to a weak solution of the nonlinear Schr\"odinger equation which, after taking the modulus, is periodic with respect to space and time. On the other hand, when the solution is smooth, then this leads to a strong solution. Indeed we actually prove the following strengthening of our main theorem.

\begin{proposition}\label{strong} The Floer curves $\tilde{u}_n$ as well as the corresponding fixed points $u^1_n$ sit in the weakly symplectic Frechet manifold  $\IP(C^{\infty}(S^1,\IC))\subset\IP(L^2(S^1,\IC))$. In particular, there are infinitely many different strong solutions $u:\IR\times\IR\to\IC$ of the equation $$i\del_t u \,=\, - \Delta u \;+\; \del_1  f(|u*\psi|^2,x,t)(u*\psi)*\psi$$ which are periodic in space and time with respect to the amplitude, $$|u(t+1,x)|\,=\,|u(t,x)|\,=\,|u(t,x+2\pi)|,$$ and are normalized in the sense that $$|u(t,\cdot)|_2^2\,:=\,\int_0^{2\pi} |u(t,x)|^2\;dx\,=\, 1.$$
\end{proposition}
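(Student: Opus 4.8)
The plan is to upgrade the $L^2$-regularity of the Floer curves $\tilde u_n$ — whose existence is asserted by the Main Theorem — to smoothness, that is, to show $\tilde u_n(s,t)\in\IP(C^\infty(S^1,\IC))$ for all $(s,t)$, and then deduce the corresponding statement for the asymptotic fixed points $u^1_n$ by a limiting argument together with elliptic regularity for the stationary equation. The statement about periodicity and normalization is then immediate: the fixed point equation on $\IP(\IH)$ says $\phi_1(u^1_n)=u^1_n$ in $\IP(\IH)$, which lifted to $\IS(\IH)$ means $\phi_1(u)=e^{ia}u$ for some phase $a$, hence $|u(1,x)|=|u(0,x)|$; spatial periodicity and the $L^2$-normalization $|u(t,\cdot)|_2=1$ hold by construction since everything lives on $\IS(L^2(S^1,\IC))$ and the flow preserves the $L^2$-norm by Proposition \ref{projective}.

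The heart of the matter is elliptic bootstrapping for the Floer equation $\CR\tilde u+\varphi(s)\nabla G_t(\tilde u)=0$. The key structural inputs are: (i) the linear part $\CR=\del_s+i\del_t$ is the standard Cauchy–Riemann operator, which gains one derivative in every Sobolev norm, both in the $(s,t)$-directions and — crucially — in the spatial $x$-direction, since $\CR$ acts diagonally on Fourier modes in $x$ and commutes with $\del_x$; (ii) by Lemma \ref{approx} and Lemma \ref{exp}, the nonlinear term $\nabla G_t$, being built from $\nabla F_t$ conjugated by the free flow $\phi^0_t$, maps bounded subsets of $\IH$ into $C^\infty(S^1,\IC)$ with \emph{uniformly} controlled Sobolev norms (the Fourier coefficients $\widehat{\nabla F(u)}(n,p)$ decay faster than any polynomial in $|n|,|p|$, uniformly on bounded sets). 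So I would argue: from the Main Theorem, $\tilde u_n$ is a finite-energy solution of bounded $L^2$-norm; apply interior elliptic estimates for $\CR$ on a slightly smaller domain to get $\tilde u_n\in H^{1}$ in all variables including $x$; feed this back — since $\nabla G_t(\tilde u)$ is then already in $H^\delta$ in $x$ for every $\delta$ with bounds depending only on the $L^2$-bound — to conclude $\tilde u_n\in H^{k}$ for every $k$ in the $x$-direction, and simultaneously in $(s,t)$. Care is needed because $\tilde u_n$ takes values in $\IP(\IH)$, not $\IH$; one works in a local unitary chart (an affine slice of $\IS(\IH)$), where the Fubini–Study almost complex structure and the equation retain the same form up to lower-order terms that inherit the same uniform estimates, using Remark \ref{intrinsic}.

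For the asymptotic fixed points one cannot simply take $s\to\infty$, since we did not assume the orbits are isolated; instead one uses the connecting sequence $s^+_\gamma\to+\infty$ with $\tilde u_n(s^+_\gamma,\cdot)\to u^1_n$ in $C^0$. Along this sequence $\varphi\equiv 1$ and the $s$-derivative terms contribute finite energy, so after the bootstrap above the functions $t\mapsto\tilde u_n(s^+_\gamma,t)$ are bounded in $H^k(S^1\times S^1)$ for every $k$; by Rellich and a diagonal argument a subsequence converges in $C^\infty$, forcing $u^1_n\in\IP(C^\infty(S^1,\IC))$, and the limit solves the stationary equation $\del_t u^1_n=X^G_t(u^1_n)$ weakly, hence — again by elliptic regularity for $\CR$ in the spatial variable applied to the $t$-periodic orbit equation — strongly and smoothly. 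Undoing the substitution $\phi_t=\phi^0_t\circ\phi^G_t$, which preserves $C^\infty(S^1,\IC)$ by Proposition \ref{projective}, yields a genuine smooth solution $u:\IR\times\IR\to\IC$ of the nonlinear Schrödinger equation.

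The main obstacle I expect is making the bootstrap work \emph{in the spatial direction} rather than just in $(s,t)$: the Floer equation is elliptic in $(s,t)$ by design, but $x$ enters only as a parameter, so gaining $x$-regularity relies entirely on the smoothing property of the nonlinearity (Lemma \ref{exp}) together with the fact that $\CR$ commutes with $\del_x$ — one must check that the spatial Sobolev norms of $\tilde u_n$ do not blow up as one iterates, which requires the uniform-on-bounded-sets nature of the estimates in Lemmas \ref{approx} and \ref{exp}, and one must handle the $\IP(\IH)$-valued (rather than $\IH$-valued) nature of the curve so that the nonlinear chart-change terms do not spoil the uniformity; this is precisely where Remark \ref{intrinsic} is used.
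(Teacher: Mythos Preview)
Your proposal has a genuine gap at the step you yourself flag as the main obstacle: gaining spatial ($x$-) regularity of $\tilde u_n$ from the Floer equation. You propose to overcome it using only (i) that $\CR$ commutes with $\del_x$ and (ii) the smoothing property of $\nabla G_t$ from Lemma~\ref{exp}. This is not sufficient. Mode by mode, the $m$-th spatial Fourier coefficient $\widehat{\tilde u}_m(s,t)$ satisfies $\CR\widehat{\tilde u}_m=\widehat g_m$ with $\widehat g_m$ rapidly decaying in $m$; after further Fourier decomposition in $t$ (using the twisted periodicity $\tilde u(\cdot,t+1)=\phi^0_{-1}\tilde u(\cdot,t)$) this becomes the scalar ODE $w'_{m,p}=\lambda^n_{m,p}w_{m,p}+f_{m,p}$ with $\lambda^n_{m,p}=2\pi p-m^2+n^2$. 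The only way to bound $w_{m,p}$ from $f_{m,p}$ and the asymptotics is via $|w_{m,p}|\lesssim |f_{m,p}|/|\lambda^n_{m,p}|$, and here the small divisors appear: a~priori $\inf_p|\lambda^n_{m,p}|$ could decay to zero arbitrarily fast in $m$, and no amount of polynomial decay of $f_{m,p}$ from Lemma~\ref{exp} would compensate. Your ``commutation with $\del_x$'' only rewrites the problem as this family of ODEs; it does not by itself yield any bound on their solutions.

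The paper's proof does not attempt a fresh bootstrap. It simply invokes Proposition~\ref{step1}, whose proof uses the diophantine estimate $\inf_p|\lambda^n_{m,p}|\geq \tfrac{c}{2}\,m^{-14}$ (coming from the finite irrationality measure of $\pi$) to control exactly this loss; the resulting uniform bound $\sup_{k\geq\ell}\|\tilde u^{k,\ell}_\perp\|_{C^\alpha}\cdot\ell^\delta\to 0$ passes to the limit curve $\tilde u_n$, which immediately gives $\tilde u_n(s,t)\in\IP(C^\infty(S^1,\IC))$ and hence $u^1_n\in\IP(C^\infty(S^1,\IC))$. In other words, the spatial regularity is not obtained by elliptic bootstrapping at all, but is inherited from the quantitative normal-component decay already established (via number theory) for the approximating finite-dimensional curves. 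Without Proposition~\ref{step1} your argument cannot close.
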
 

In fact, taking remark \ref{intrinsic} into account, we claim that there exists an intrinsic Hamiltonian Floer theory for weakly symplectic Frechet manifolds. Furthermore, defining $\tilde{\tilde{u}}(s,t,x):=\big(\phi^0_t(\tilde{u}(s,t))\big)(x)$, every Floer curve $\tilde{u}=\tilde{u}_n$ provides us with a strong solution $\tilde{\tilde{u}}:\IR\times\IR\times\IR\to\IC$ of a partial differential equation of perturbed Cauchy-Riemann-Schr\"odinger type,$$\del_s\tilde{\tilde{u}}\;+\;i\del_t\tilde{\tilde{u}} \,=\, - \Delta\tilde{\tilde{u}} \;+\; \varphi(s)\cdot \del_1  f(|\tilde{\tilde{u}}*\psi|^2,x,t)(\tilde{\tilde{u}}*\psi)*\psi,$$ satisfying periodicity and asymptotic conditions. 

\begin{remark} As shown in lemma \ref{approx}, the Hofer norm of any nonlinearity of convolution type is finite after passing to projective Hilbert space, so that the condition can always be fulfilled after multiplying $f$ by a suitably small positive number, or equivalently rescaling time and space coordinates. More precisely, in order to ensure that $|||F|||<\pi/2$, it suffices to have $|f(w,x,t)|<1/8$ for all $(w,x,t)\in [0,|\psi|_2^2]\times S^1\times [0,1]$, since from the latter it follows that the supremum norm of $F_t$ on $\IP(\IH)$ is bounded by $\pi/4$ for all $t$. The bound on the Hofer norm is required to ensure that the energy of the Floer curves is small enough to exclude bubbling-off of holomorphic spheres in infinite dimensions, which is needed to bound the derivatives and ultimately also to solve the small divisor problem. On the other hand, we would like to stress that no bound is required for the higher derivatives of $f$. 
\end{remark}

In contrast to the case of Floer theory in finite dimensions, we are for the first time faced with the famous small divisor problem that plays an important role in KAM theory, see \cite{EK} for the case of the nonlinear Schr\"odinger equation. Following the proof of proposition \ref{free-NLS}, the complex eigenvalues of the (linear) time-one flow map $\phi^0_1$ are given by the sequence $\exp(in^2)\in\IC$, $n\in\IN$. After restricting to a finite-dimensional subspace $\IC^{2k+1}\subset\IH$ and passing to the projectivization, it follows that all fixed points of $\phi^0_1$ are nondegenerate in the sense that one is not an eigenvalue of the linearized return map. On the other hand, after passing to the infinite-dimensional case, the latter is no longer true, as a subsequence of eigenvalues converges to $1$. In order to deal with the resulting lack of nondegeneracy of the time-one flow map of the free Schr\"odinger equation, we will use a deep result from the theory of diophantine approximations, proved using methods from analytic number theory. In essence, we have to use that the space period $2\pi$ cannot be approximated too well by rational multiples of the time period $1$.

\begin{remark} Before we turn to the proof of the main theorem, the following remarks are in order:
\begin{itemize} 
\item[i)] The problem of finding periodic solutions of Hamiltonian PDE has clearly attracted the interest of many researchers and many excellent papers exist on this topic: see the great book \cite{B} of M. Berti for an overview of the current state of the art. Apart from the non-perturbative result of P. Rabinowitz on the existence of time-periodic solutions of the nonlinear wave equation, see \cite{Ra} as well as \cite{BCN}, many deep perturbative results exist employing Kolmogorov-Arnold-Moser theory and the Lyapunov Center Theorem, see \cite{CW}, \cite{K2}, \cite{W}. On the other hand, when the nonlinearity is time-independent (autonomous), or has a simple time-dependence, periodic solutions can be found by prescribing the time dependence and by studying an elliptic PDE, see e.g. \cite{GP}. Just like the result of Rabinowitz, we stress that our result is \emph{non-perturbative}, and it is concerned with the general \emph{non-autonomous} case, i.e., we do not prescribe the time-dependence in any way.  
\item[ii)] In order to avoid the problem with small divisors, Rabinowitz considers the resonant case where the time period is a natural multiple of the space period. While the result in \cite{Ra} hence only holds for very special space periods, we consider the general \emph{non-resonant} case. In particular, we claim that our theorem continues to hold when the space period $2\pi$ is replaced by any other space period $L$ and the time period $1$ is replaced by any other time period $T$ as long as the quotient $\theta:=L^2/(2\pi\cdot T)$ is generic in the sense that it is a  diophantine real number, that is, there exist $r>0$ and $c>0$ such that $\inf_{p\in\IZ} |\theta - p/q|\geq c/q^r$ for all $q\in\IN$, see our follow-up paper \cite{FL}. While in the current paper we have decided to make concrete choices, both for the type of Hamiltonian PDE as well as the time and space periods, in \cite{FL} we study general Hamiltonian PDE with arbitrary time and space periods. We stress however that our result is definitely \emph{not} contained in \cite{FL} as a special case, since in the latter we work on linear Hilbert space and only prove the existence of a single forced periodic solution.    
\end{itemize}\end{remark}

Forgetting for the moment that we are working in the setting of infinite-dimensional symplectic manifolds, following Gromov's existence proof of symplectic fixed points in \cite{Gr} the idea would be to study moduli spaces of Floer curves $(\tilde{u},T)$, where $T>0$ is some non-negative real number and $\tilde{u}=\tilde{u}_{n,T}$ now denotes a smooth map $\tilde{u}:\IR\times\IR\to\IP(\IH)$ with $\tilde{u}(\cdot,t+1)=\phi^0_{-1}(\tilde{u}(\cdot,t))$, which satisfies the asymptotic condition $\tilde{u}(s,t)\to u^0_n$ as $s\to\pm\infty$ as well as the $T$-dependent perturbed Cauchy-Riemann equation $$\CR^T_G\tilde{u}\,=\,\CR\tilde{u} \;+\; \varphi_T(s)\cdot\nabla G_t(\tilde{u})\,=\, 0.$$ Here $\varphi_T:\IR\to \IR$ now denotes a smooth family of smooth cut-off functions with with $\varphi_0=0$ and $\varphi_T(s)=0$ for $s\leq -1$ and $s\geq 2T+1$, $\varphi_T(s)=1$ for $s\in [0,2T]$ and slope $0\leq\varphi_T'(s)\leq 2$ for $s<0$ and $-2\leq\varphi_T'(s)\leq 0$ for $s>0$ as long as $T\geq 1$. Assuming that, as in the case of finite-dimensional projective spaces, one could compactify the above moduli space by just adding broken holomorphic curves corresponding to the case that $T$ converges to $+\infty$, see \cite{DS} and the references therein, one would be able to show that for every $n\in\IN$ there exists a Floer curve connecting the fixed point $u^0_n$ of the free Schr\"odinger equation with a fixed point $u^1_n$ of the given Schr\"odinger equation of convolution type. In order to see that there are indeed infinitely many different fixed points $u^1_n$, we follow Floer's original proof of the degenerate Arnold conjecture for complex projective spaces. \\

On the other hand, it is quite apparent that the underlying theory of pseudo-holomorphic curves does not instantly carry over from finite to infinite dimensions. In particular, the non-compactness of the target manifold leads to the fact that Gromov's compactness theorem does not naturally generalize from finite-dimensional projective spaces to $\IP(\IH)$. For our proof we will make use of the fact that the Hamiltonian function $F_t$ (and hence $G_t$) on $\IP(\IH)$ defining the nonlinearity can be approximated uniformly by finite-dimensional Hamiltonian functions $F^k_t$ (and $G^k_t$) on $\CP^{2k}\subset\IP(\IH)$. Since the existence of Floer curves $\tilde{u}^k=\tilde{u}^k_{n,T_k}:\IR\times\IR\to\CP^{2k}\subset\IP(\IH)$ with $T_k\to\infty$ is guaranteed for every finite-dimensional Hamiltonian $F^k$, the idea is to show that a subsequence converges in the $C^{\infty}_{\loc}$-sense to a Floer curve $\tilde{u}=\tilde{u}_n:\IR\times\IR\to\IP(\IH)$ as in the main theorem. The fact that Gromov-Floer compactness still holds now relies on the following two key observations: First, the bubbling-off argument needed to uniformly bound the derivatives of the sequence $\tilde{u}^k$ still works. And secondly, although the target manifold is not compact and the time-one map is degenerate in the sense that a sequence of eigenvalues converges to $1$, $C^\infty$-convergence of the Floer curves can still be established as long as the infinite-dimensional nonlinearity is better approximated by finite-dimensional ones than that the space period $2\pi$ is approximated by rational multiples of the time period $1$. For the proof of the latter we use the aforementioned result from number theory.  \\

This paper is organized as follows: While in section $4$ we show how finite-dimensional symplectic Floer theory can be used to prove our main theorem in the special case of so-called finite-dimensional nonlinearities, for the general case of infinite-dimensional nonlinearities we prove that a suitable sequence of Floer curves in projective spaces of growing finite dimension has a subsequence converging to a Floer curve in projective Hilbert space as in the main theorem. While we introduce this sequence of finite-dimensional Floer curves in section $5$, we show in section $6$ that it has uniformly bounded derivatives, using a bubbling-off argument in projective Hilbert space. Together with a result from the theory of diophantine approximations, we will show that this will be sufficient to control $C^\infty$-convergence in infinite dimensions in section $7$ and finish the proof of our main theorem in section $8$. 

\section{Floer curves in complex projective spaces}

Before we turn to the general case, we first restrict ourselves to the case of finite-dimensional nonlinearities. Based on Floer's proof of the Arnold conjecture in finite dimensions we show

\begin{proposition}\label{finite-dim} Assume the nonlinearity is finite-dimensional in the sense that the support of the Fourier transform $\hat{\psi}:\IZ\to\IC$ of the smoothing kernel $\psi$ is finite, that is, $\textrm{supp}(\hat{\psi})\subset\{-k,\ldots,+k\}$ for some natural number $k$. Then the statement of the main theorem holds. \end{proposition}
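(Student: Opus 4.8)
The plan is to reduce the statement to finite-dimensional symplectic Floer theory on a complex projective space, where the program outlined in Section~3 is rigorous.

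First I would reduce to $\CP^{2k}$. Since $\supp(\hat\psi)\subset\{-k,\dots,k\}$ we have $\psi=\psi^k$, so by Lemma~\ref{approx} the flow $\phi^F_t$ preserves $\IC^{2k+1}\subset\IH$, and by Proposition~\ref{free-NLS} so does $\phi^0_t$; hence $\phi_t=\phi^0_t\circ\phi^G_t$ preserves $\IC^{2k+1}$, and, also preserving the $L^2$-norm, it descends to a smooth flow on the closed symplectic submanifold $\CP^{2k}:=\IP(\IC^{2k+1})\subset\IP(\IH)$ carrying its (monotone) Fubini--Study form, normalized so that lines have area $\pi$. As $H^1(\CP^{2k};\IR)=0$ this flow is Hamiltonian, and by Proposition~\ref{projective} it is generated by the descent of $H^0+F_t$. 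For the frequencies $n>k$ one has $u^0_n*\psi=0$, hence $\nabla F_t(u^0_n)=0$, and since $\phi^0_t$ is a Fubini--Study isometry fixing $u^0_n$ also $\nabla G_t(u^0_n)=0$; so $u^0_n$ is already a fixed point of $\phi_1$, and we take $u^1_n:=u^0_n$ with the constant Floer curve $\tilde u_n\equiv u^0_n$. These are pairwise distinct and, lying outside $\CP^{2k}$, automatically different from the ones found below, so it remains to treat the finitely many $n\le k$, whose $u^0_n$ lie in $\CP^{2k}$; from here on everything takes place inside the closed monotone manifold $\CP^{2k}$.

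For such $n$ the existence of $u^1_n$ and of a connecting Floer curve is the finite-dimensional, and by now standard, content of the arguments of Gromov and Floer. After a small finite-dimensional perturbation of the nonlinearity making the relevant time-one maps nondegenerate --- to be removed at the end --- I would study, as in Section~3, the moduli space of pairs $(\tilde u,T)$, $T\ge 0$, solving $\CR\tilde u+\varphi_T(s)\nabla G_t(\tilde u)=0$ with $\tilde u(\cdot,t+1)=\phi^0_{-1}(\tilde u(\cdot,t))$, $\tilde u(s,\cdot)\to u^0_n$ as $s\to\pm\infty$, in the trivial homotopy class. Because both asymptotics equal the single orbit $u^0_n$, at which the interpolation Hamiltonian vanishes, the standard energy identity bounds the energy of every such curve by $|||G|||=|||F|||<\pi/2<\pi$, which is below the minimal area $\pi$ of a holomorphic sphere in $\CP^{2k}$; hence no bubbling occurs and Gromov--Floer compactness applies. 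After a generic choice of perturbation and of the almost complex structure the moduli space is a smooth $1$-manifold whose only boundary point, at $T=0$, is the transversally cut out constant curve at $u^0_n$ (the only finite-energy $\phi^0$-holomorphic curve in class $0$ with both ends at $u^0_n$); as a compact $1$-manifold with boundary has an even number of boundary points, it must be non-compact, and the only remaining source of non-compactness is $T\to+\infty$. Recentering at $s=0$ and passing to the Gromov--Floer limit, its ``left half'' converges to a Floer curve $\tilde u_n:\IR\times\IR\to\CP^{2k}\subset\IP(\IH)$ for the fixed cut-off $\varphi$ with $\tilde u_n(s,\cdot)\to u^0_n$ as $s\to-\infty$ and $\tilde u_n(s_\gamma^+,\cdot)\to u^1_n$ along some $s_\gamma^+\to+\infty$, where $u^1_n$ is a $1$-periodic orbit of $X^G$, i.e.\ a fixed point of $\phi_1$. (Equivalently, $\tilde u_n$ witnesses that the monotone continuation chain map from the Floer complex of $\phi^0$ to that of $\phi_1$, a quasi-isomorphism, is nonzero on $u^0_n$.) Letting the perturbation tend to zero and passing to a limit once more yields $\tilde u_n$ and $u^1_n$ for the original equation, with the asymptotics understood in the $C^0$-accumulation sense of the main theorem, as is necessary since the orbits need not be isolated.

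It remains to know that the finitely many $u^1_n$, $n\le k$, can be taken pairwise distinct, and here I would follow Floer's original proof of the degenerate Arnold conjecture for $\CP^N$ ($N=2k$): the fixed points of $\phi_1$ are detected by $2k+1$ distinct classes of the filtered Floer/quantum homology $\HF_*(\phi_1)\cong\QH_*(\CP^{2k})$, and the inputs that separate the $u^1_n$ are that the connecting curves have energy $<\pi/2$ and that the $u^0_n$, $0\le n\le k$, are pairwise distinct generators of the action-filtered Floer complex of $\phi^0$ --- their $H^0$-actions $-n^2/2$ staying pairwise distinct even in $\IR/\pi\IZ$, since $(m^2-n^2)/2\notin\pi\IZ$ for $m\ne n$. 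Together with the $u^1_n=u^0_n$ for $n>k$ this gives infinitely many pairwise distinct fixed points. I do not expect a serious obstacle anywhere: since $\CP^{2k}$ is a closed monotone symplectic manifold, only standard finite-dimensional Floer theory is used --- Gromov--Floer compactness and the adiabatic $T\to+\infty$ degeneration (cf.\ \cite{DS}), transversality, and Floer's $\CP^N$ argument (cf.\ \cite{Fl}) --- and the only point to watch is the systematic degeneracy of the free flow: for $n\ge 1$ the map $\phi^0_t$ fixes the entire projective line spanned by $u^0_n$ and $u^0_{-n}$, so the $u^0_n$ are never isolated, which is why one must pass through vanishing perturbations and adopt the weak notion of connecting Floer curve already built into the statement of the main theorem.
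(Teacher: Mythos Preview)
Your proposal follows essentially the same route as the paper: reduce to the closed monotone manifold $\CP^{2k}$, run the $T$-stretching argument for the moduli space of homotopy Floer curves asymptotic to $u^0_n$ (Proposition~\ref{curves}), pass to a $C^\infty_{\loc}$-limit as $T\to\infty$ and as the regularizing perturbation vanishes, and treat the frequencies $n>k$ by constant curves.

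Two small points of divergence are worth flagging. First, the energy bound for the $\varphi_T$-family is $2|||F|||$, not $|||F|||$, since the interpolation turns on and off; this is what the paper records, and it is still $<\pi$, so your conclusion is unaffected. Second, for the distinctness of the $u^1_n$ with $n\le k$ the paper invokes Floer's cup-length argument via the action of $\PD[\CP^{2k-1}]\in H^2(\CP^{2k})$ on Floer cohomology: one produces, for each consecutive pair, a nontrivial Floer cylinder meeting a fixed pseudocycle representative of $[\CP^{2k-1}]$, and the Conley--Zehnder index gap $2$ versus minimal Chern number $2k$ forces the endpoints to be distinct even in the degenerate limit $\nu\to 0$. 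Your alternative sketch via the action filtration and the irrationality statement $(m^2-n^2)/2\notin\pi\IZ$ is plausible but would need more care to survive the limit $\nu\to 0$ (cappings, and the fact that action windows can collide for possibly non-isolated orbits); the cup-product argument is more robust here precisely because the intersecting Floer cylinder persists in the limit. Your observation about the $\pm n$ degeneracy of $\phi^0$ is well taken and is implicitly absorbed by the paper's perturbation to a regular Hamiltonian.
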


Identifying the symplectic Hilbert space $\IH$ with $\ell^2(\IC)$ using the Fourier transform, it follows that $G$ just depends on its value after applying the projection $\pi_k:\IH\to\IC^{2k+1}$ onto the finite-dimensional symplectic subspace $\IC^{2k+1}=\{u\in\IH: \hat{u}(n)=0\,\,\textrm{for all}\,\,|n|>k\}$. In other words we have $G=G^k:=G\circ\pi_k$, so that at every point  the gradients $\nabla G_t$ and $X^G_t$ are vectors in $\IC^{2k+1}\subset\IH$. Note that, together with proposition \ref{free-NLS},  this implies that the flow $\phi_t$ on $\IP(\IH)$ restricts to a symplectic flow $\phi^k_t$ on $\CP^{2k}$. \\

With this the proof essentially relies on the following existence result of Floer curves in finite-dimensional complex projective spaces. From now on let us assume that the Hofer norm $|||F|||$ of $F_t$ on $\IP(\IH)$ is strictly smaller than $\pi/2$. Furthermore, let $\varphi_T:\IR\to \IR$, $T>0$ denote a smooth family of smooth cut-off functions as in section $3$, i.e.\ with $\varphi_0=0$ and $\varphi_T(s)=0$ for $s\leq -1$ and $s\geq 2T+1$, $\varphi_T(s)=1$ for $0\leq s\leq 2T$ and slope $0\leq\varphi_T'(s)\leq 2$ for $s<0$ and $-2\leq\varphi_T'(s)\leq 0$ for $s>0$ as long as $T\geq 1$. Furthermore the natural Riemannian norm on $\CP^{2k}$ is denoted by $|\cdot|$.\\

For the following statement we need to assume that the Hamiltonian $F_t$ is regular in the sense that the transversality holds for a certain nonlinear Cauchy-Riemann operator. Note that this can be achieved after adding an arbitrarily small generic time-dependent perturbation to the original time-dependent Hamiltonian $F_t$.   

\begin{proposition}\label{curves} Let $k\in\IN$ and $n\leq k$. Possibly after adding a small generic time-periodic perturbation to $F_t$, there exists a connected moduli space $\IM^{k,n}$ of tuples $(\tilde{u},T)$ consisting of non-negative real number $T$ and a smooth map $\tilde{u}=\tilde{u}_n=\tilde{u}^k_{n,T}:\IR\times\IR\to\CP^{2k}$, called \emph{Floer curve}, satisfying the periodicity condition $\tilde{u}(\cdot,t+1)=\phi^0_{-1}(\tilde{u}(\cdot,t))$, the asymptotic condition $\tilde{u}_n(s,\cdot)\to u^0_n$ as $s\to\pm\infty$, and the perturbed Cauchy-Riemann equation $$0=\CR^T_G\tilde{u}=\CR\tilde{u} + \varphi_T(s)\cdot \nabla G^k_t(\tilde{u}),$$ such that the canonical projection $\IM^{k,n}\to\IR^+\cup\{0\}$, $(\tilde{u},T)\mapsto T$ is surjective. Furthermore, for the resulting families of maps $\tilde{u}$ we have that the energy $E(\tilde{u})$ defined by $$E(\tilde{u}):=\int_{-\infty}^{+\infty}\int_0^1 \frac{1}{2}\big(|\del_s\tilde{u}|^2 + |\del_t\tilde{u} - \varphi_T(s)\cdot X^{G,k}(\tilde{u})|^2\big)\;dt\;ds$$ is bounded by $2 |||G^k|||<\pi$. \end{proposition}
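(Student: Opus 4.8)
### Proof proposal

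The plan is to obtain the moduli space $\IM^{k,n}$ by a standard finite-dimensional Floer-theoretic continuation argument on $\CP^{2k}$, and then to derive the energy bound $E(\tilde u)<\pi$ purely from the structure of the perturbed Cauchy--Riemann equation and the assumed bound $|||F|||<\pi/2$. First I would fix $k\in\IN$ and $n\le k$, and recall from Proposition~\ref{free-NLS} that $\phi^0_1$ restricts to a finite-dimensional linear symplectomorphism of $\CP^{2k}$ whose fixed points are the complex oscillations $u^0_m$, $|m|\le k$, and that these are nondegenerate in $\CP^{2k}$ because $1$ is not an eigenvalue of the linearized return map there (the eigenvalues being $\exp(i(n^2-m^2))$, which in a \emph{finite} frequency window are all distinct from $1$). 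After adding a small generic time-periodic perturbation to $F_t$ --- which by Lemma~\ref{approx} keeps the Hofer norm below $\pi/2$ --- I would invoke the standard transversality package (Floer--Hofer--Salamon, see \cite{DS}) so that the relevant parametrized moduli spaces of solutions of $\CR^T_G\tilde u=0$ with the $\phi^0_{-1}$-twisted periodicity and asymptotics $\tilde u(s,\cdot)\to u^0_n$ are smooth manifolds cut out transversally. The twisted periodicity condition $\tilde u(\cdot,t+1)=\phi^0_{-1}(\tilde u(\cdot,t))$ is just the usual device for turning a Hamiltonian with a nontrivial linear part $H^0$ into a genuinely $t$-periodic Floer problem by passing to $G_t=F_t\circ\phi^0_t$.

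The key point for surjectivity of $(\tilde u,T)\mapsto T$ onto $\IR^{\geq 0}\cup\{0\}$ is a connectedness/compactness argument in the $T$-parameter. At $T=0$ we have $\varphi_0=0$, so the equation is the unperturbed Cauchy--Riemann equation $\CR\tilde u=0$ with $\tilde u(s,\cdot)\to u^0_n$ at both ends; the constant solution $\tilde u\equiv u^0_n$ is such a Floer curve (note it has energy zero), giving a point over $T=0$. For $T>0$ the energy bound $E(\tilde u)\le 2|||G^k|||<\pi$, established below, is strictly smaller than the area $\pi$ of a line in $\CP^{2k}$, so no holomorphic sphere can bubble off; combined with the standard annihilation of spheres in $\CP^{2k}$ and the nondegeneracy of all the $u^0_m$ in $\CP^{2k}$, Gromov--Floer compactness shows that as $T\to T_\infty<\infty$ the solutions converge (modulo breaking at the asymptotic orbits $u^0_m$) to solutions over $T_\infty$, and that $\IM^{k,n}$ fibers properly over $\IR^{\geq 0}$. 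A continuation/homotopy argument --- the component of $\IM^{k,n}$ containing the trivial solution over $T=0$ --- then projects onto all of $\IR^{\geq 0}$; this is exactly the mechanism by which a fixed point of $\phi_1$ is produced as $T\to\infty$, following Gromov and Floer. I would package this as: the cobordism class of the zero set of $\CR^T_G$ over the half-line is nontrivial because it is nontrivial at $T=0$.

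For the energy estimate I would argue directly. For a solution $\tilde u$ of $\CR\tilde u+\varphi_T(s)\nabla G^k_t(\tilde u)=0$ the two terms in $E(\tilde u)$ are $|\del_s\tilde u|^2$ and $|\del_t\tilde u-\varphi_T(s)X^{G,k}(\tilde u)|^2$; writing $\nabla G^k_t=-J_0X^{G,k}_t$ (the relation $\langle\cdot,\cdot\rangle_\IR=\omega(\cdot,J_0\cdot)$ from Section~1) and using $\del_t\tilde u=J_0\del_s\tilde u-\varphi_T(s)\nabla G_t(\tilde u)\cdot$(rearranged), one finds in the usual way that the integrand equals $\tilde u^*\omega + \varphi_T(s)\,\del_s\big(G^k_t(\tilde u(s,t))\big)$ up to an exact term, so
\begin{equation*}
E(\tilde u)=\int \tilde u^*\omega \;-\; \int_{-\infty}^{+\infty}\int_0^1 \varphi_T'(s)\,G^k_t(\tilde u(s,t))\,dt\,ds .
\end{equation*}
The first integral vanishes because the asymptotics at $s\to\pm\infty$ agree (both $u^0_n$) and $\pi_2(\CP^{2k})$ contributes in multiples of $\pi$ but the continuation family is connected to the zero-energy solution, so the topological term is $0$ on the relevant component; the second is bounded, using $0\le\varphi_T'\le 2$ on $s<0$ and $-2\le\varphi_T'\le 0$ on $s>0$ together with $\int_{s<0}\varphi_T'=1=-\int_{s>0}\varphi_T'$, by $\int_0^1(\max G^k_t-\min G^k_t)\,dt=|||G^k|||$ on each half; hence $E(\tilde u)\le 2|||G^k|||$. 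Finally $|||G^k|||=|||F^k|||\le|||F|||<\pi/2$ because $G^k_t=F^k_t\circ\phi^0_t$ and $\phi^0_t$ is a symplectomorphism of $\CP^{2k}$, so $G^k_t$ and $F^k_t$ have the same max and min, and by Lemma~\ref{approx} the Hofer norm of $F^k_t$ is at most that of $F_t$. This gives $E(\tilde u)\le 2|||G^k|||<\pi$.

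I expect the main obstacle to be the bookkeeping that pins the topological term $\int\tilde u^*\omega$ to zero on the component of $\IM^{k,n}$ under consideration: a priori a solution with matching asymptotics could carry spherical homology class of positive area $m\pi$, and the energy identity only gives $E=m\pi-\int\varphi_T'G$, which for $m\ge1$ is no longer below $\pi$. The resolution --- that we only ever select the continuation component containing the trivial $T=0$ solution, on which the class is $0$, and that the energy bound then a posteriori forbids any bubbling that could change the class along the family --- is standard but must be stated carefully; everything else (transversality on $\CP^{2k}$, nondegeneracy of the $u^0_m$ there, the energy identity computation) is routine finite-dimensional Floer theory.
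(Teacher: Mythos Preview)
Your approach is essentially the paper's: start from the constant solution at $T=0$, perturb for transversality to obtain a one-dimensional moduli space, follow the connected component containing $(u^0_n,0)$ to all $T>0$ via Gromov--Floer compactness, and bound the energy by $2|||G^k|||$ using the standard action-energy identity. The paper packages the setup by first passing to the untwisted picture $\tilde{\tilde u}(s,t)=\phi^0_t(\tilde u(s,t))$, which converts the twisted problem into ordinary Hamiltonian Floer theory for $H_{s,t}=H^0+\varphi_T(s)F_t$ on $\CP^{2k}$; but this is only a change of coordinates, and your explicit derivation of the energy identity together with your handling of the topological term $\int\tilde u^*\omega$ (pinned to zero on the component through the constant solution) is precisely the content the paper leaves implicit.

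There is, however, one genuine gap. You write that Gromov--Floer compactness yields convergence ``modulo breaking at the asymptotic orbits $u^0_m$'' and then assert that $\IM^{k,n}$ fibers properly over $\IR^{\geq 0}$, but you never explain why breaking at \emph{finite} $T$ is excluded. Nondegeneracy of the $u^0_m$ does not prevent this; what the paper uses is that, in the $\tilde{\tilde u}$-picture, $H^0$ is the canonical Morse function on $\CP^{2k}$ and all of its critical points have \emph{even} Morse index, hence even Conley--Zehnder index. A broken configuration over some finite $T_0$ would have to contain a rigid $H^0$-Floer cylinder between $u^0_n$ and some $u^0_m$, $m\neq n$, i.e.\ an index-one cylinder; since all index differences are even, no such cylinder exists. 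Without this parity argument your connectedness step is incomplete: the one-manifold could in principle terminate at a broken curve over a finite $T_0$ and never reach large $T$, and then neither the surjectivity of $(\tilde u,T)\mapsto T$ nor the a posteriori confinement of the homology class (which you correctly flag as the delicate point) would follow.
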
 

\begin{proof} While for the statement we use the formalism of Floer homology for general symplectomorphisms from \cite{DS}, everything can be translated into the more established language of Floer homology for Hamiltonian symplectomorphisms used in the standard reference \cite{S}: Since the time-one map $\phi^0_t$ of the free Hamiltonian $H^0$ is Hamiltonian when restricted to the \emph{finite}-dimensional manifold $\CP^{2k}$, there is a one-to-one correspondence between maps $\tilde{u}=\tilde{u}^k_{n,T}:\IR\times\IR\to\CP^{2k}$ as in the statement and maps $\tilde{\tilde{u}}=\tilde{\tilde{u}}^k_{n,T}:\RS\to\CP^{2k}$ satisfying the asymptotic condition $\tilde{\tilde{u}}_n(s,\cdot)\to u^0_n$ as $s\to\pm\infty$, and the perturbed Cauchy-Riemann equation $$0=\CR^T_H(\tilde{\tilde{u}})=\CR{\tilde{\tilde{u}}} + \nabla H_{s,t}({\tilde{\tilde{u}}})\,\,\textrm{with}\,\,H_{s,t}=H^0 + \varphi_T(s)\cdot F_t,$$ given by $\tilde{\tilde{u}}(s,t)=\phi^0_t(\tilde{u}(s,t))$. Here we also use that the flow $\phi^0_t$ of the free Hamiltonian $H^0$ preserves the complex structure $i$ on $\CP^{2k}$.\\

For $T=0$ the constant curve $\tilde{\tilde{u}}^k_{n,0}(s,t)=u^0_n$ staying over the critical point $u^0_n\in \CP^{2k}$ of $H^0$ is the unique solution. Possibly after adding an arbitrarily small generic time-dependent perturbation, we may assume that transversality for the Cauchy-Riemann operator $\CR_H$ given by $\CR_H(\tilde{\tilde{u}},T)=\CR^T_H\tilde{\tilde{u}}$ holds. Then it follows that there is a connected moduli space $\IM^{k,n}$ of tuples $(\tilde{\tilde{u}},T)$ containing $(u^0_n,0)$ which forms a one-dimensional manifold. On the other hand, the existence of a Floer curve $\tilde{\tilde{u}}^k_{n,T}$ for all $T>0$ in $\IM^{k,n}$ follows from the Gromov-Floer compactness result, as we can exclude bubbling-off of holomorphic spheres as well as breaking-off of cylinders for finite $T$. Concerning the first claim, note that bubbling-off of holomorphic spheres is excluded due to fact that the energy $E(\tilde{\tilde{u}})=E(\tilde{u})$ is bounded from above by twice the Hofer norm of the Hamiltonian $G^k$, and the Hofer norm of $G^k$ is smaller than 1/2 the minimal energy of a holomorphic sphere in $\CP^{2k}$, where we also use that the curve $\tilde{\tilde{u}}^k_{n,T}$ is homotopic to the constant curve $\tilde{\tilde{u}}^k_{n,0}=u^0_n$. Concerning the second claim, note that, due to the bounded support of $\varphi_T$ for finite $T$, the Floer curve would have to break at another fixed point $u^0_m$, $m\neq n$ of the free flow. Since all critical points $u^0_n$ of the canonical Morse function $H^0$ on $\CP^{2k}$ have even Morse index and hence even Conley-Zehnder index, the latter is excluded by index reasons. \end{proof}

Combined with our novel infinite-dimensional Gromov-Floer compactness result in the presence of small divisors, the above result will turn out is sufficient to prove the existence of a single time-periodic solution. In particular, we can directly show that the main theorem holds for true in the case of finite-dimensional nonlinearities. 

\begin{proof}\emph{(of proposition \ref{finite-dim})} Note that the original Hamiltonian $F_t$ can be approximated by a family of perturbed $t$-dependent Hamitonians $F^{\nu}_t:\CP^{2k}\to\IR$ with $F^{\nu}_t\to F_t$ uniformly with all derivatives as $\nu\to 0$. As $T$ tends to $+\infty$ and $\nu$ converges to zero, it follows from the uniform energy bound together with elliptic bootstrapping that the sequence $\tilde{u}_T=\tilde{u}^k_{n,T}$ of Floer curves converges in the $C^{\infty}_{\loc}$-sense to a smooth map $\tilde{u}:\IR\times\IR\to\CP^{2k}$ with $\tilde{u}(\cdot,t+1)=\phi^0_{-1}(\tilde{u}(\cdot,t))$. The map $\tilde{u}$ satisfies the Floer equation $0=\CR\tilde{u}+\varphi(s)\cdot\nabla G_t(\tilde{u}),$ where $\varphi$ is now a smooth cut-off function with $\varphi(s)=0$ for $s\leq -1$ and $\varphi(s)=1$ for $s\geq 0$. It connects the fixed point $u^0_n$ of the free Schr\"odinger equation with a fixed point $u^1_n$ of $\phi_1$ in the sense that there exist sequences $(s_{\gamma}^{\pm})$ of positive and negative real numbers, $s_{\gamma}^{\pm}\to\pm\infty$ with $\tilde{u}(s_{\gamma}^-,\cdot)\to u^0_n$ and $\tilde{u}(s_{\gamma}^+,\cdot)\to u^1_n$ as $\gamma\to\infty$. In order to see the latter, note that by the bound for the energy $E(\tilde{u})$ from proposition \ref{curves}, it follows that for every $\gamma\in\IN$ there exists $\gamma\leq |s_{\gamma}^{\pm}|\leq 2\gamma$ such that $$\int_0^1 |\del_t\tilde{u}(s_{\gamma}^{\pm},t) - \varphi(s_{\gamma}^{\pm})X^{G,k}_t(\tilde{u}(s_{\gamma}^{\pm},t))|^2\;dt\,<\,\frac{\pi}{\gamma}.$$ By compactness of $\CP^{2k}$ we know, possibly after passing to a subsequence, that the sequence $\tilde{u}(s_{\gamma}^{\pm},0)$ converges to a fixed point of $\phi^0_1$ or $\phi_1$, respectively. In order to see that $\tilde{u}(s_{\gamma}^-,\cdot)$ indeed converges to the fixed point $u^0_m$ of $\phi^0_1$ with $m=n$, recall that breaking-off of cylinders for the free Hamiltonian $H^0$ can be excluded by index reasons. \\

In order to see that the resulting fixed points $u^1_m$, $u^1_n$ for $H_t=H^0+F_t$ are different if $m\neq n$ as well, it suffices to refer to Floer's proof in \cite{Fl} of the degenerate Arnold conjecture for $\CP^{2k}$ using the cup action on Floer cohomology. Since Floer cohomology can only be defined for Hamiltonians with nondegenerate one-periodic orbits, we consider again the sequence of perturbed Hamiltonians $H^{\nu}_t=H^0+F^{\nu}_t$ with $H^{\nu}_t\to H_t$ from before. Then the Floer cohomology groups $\HF^*=\HF^*(H^{\nu}_t)$ are independent of the chosen nondegenerate Hamiltonian and, on the chain level, generated by lifts of the one-periodic orbits of $H^{\nu}_t$ to the universal cover $\widetilde{\LL}(\CP^{2k})$ of the loop space $\LL(\CP^{2k})$ of $\CP^{2k}$. Working with the universal cover is required, since the Hamiltonian action functional as well as the Conley-Zehnder index are only well-defined after choosing a filling disk for each orbit. In particular, the Floer cohomology is cyclic in the sense that $\HF^*=\HF^{*+2N}$ where $N=2k$ is the minimal Chern number of $\CP^{2k}$. Furthermore there is a nontrivial cup action of the singular cohomology $H^*(\CP^{2k})$ on Floer homology $\HF^*$ which in the case of the canonical generator $\PD[\CP^{2k-1}]\in H^2(\CP^{2k})$ defines a linear isomorphism between $\HF^*$ and $\HF^{*+2}$. For this one uses that, when $F^{\nu}_t=0$, $H^{\nu}_t=H^0$ is the canonical Morse function on $\CP^{2k}$ which has the property that every Morse gradient flow line between $u^0_n$ and $u^0_{n+1}$ intersects every representative of $[\CP^{2k-1}]$ precisely once; a similar statement holds for every Morse gradient flow line between $u^0_{2k}$ and $u^0_1$. Denoting for each $n\leq k$ by $\tilde{u}^{1,\nu}_n$ the lifts to $\widetilde{\LL}(\CP^{2k})$ of fixed points of $H^{\nu}_t$ which converge to $u^1_n$ as $\nu\to\infty$, it follows from the nontriviality of the cup action (or equivalently by established compactness and gluing arguments) that there exist connecting Floer curves $\tilde{\tilde{u}}^{\nu}_{n,n+1}:\RS\to\CP^{2k}$, $1\leq n\leq 2k-1$ and $\tilde{\tilde{u}}^{\nu}_{2k,1}:\RS\to\CP^{2k}$ satisfying the perturbed Cauchy-Riemann equation $$0= \CR\tilde{\tilde{u}}^{\nu}_{m,n}+\nabla H^{\nu}_t(\tilde{\tilde{u}}^{\nu}_{m,n}),$$ the intersection property $\tilde{\tilde{u}}^{\nu}_{m,n}(0,0)\in [\CP^{2k-1}]$, as well as the asymptotic condition $\tilde{\tilde{u}}^{\nu}_{m,n}(s,\cdot)\to\tilde{u}^{1,\nu}_m,\tilde{u}^{1,\nu}_n$ as $s\to\pm\infty$. Here $[\CP^{2k-1}]$ denotes any chosen fixed pseudocycle representing this class and $\tilde{u}^{1,\nu}_m,\tilde{u}^{1,\nu}_n$ are lifts of $u^{1,\nu}_m$, $u^{1,\nu}_n$ such that, after gluing in the corresponding filling disks, the Floer curve is contractible. Since for their Conley-Zehnder indices we have $\CZ(\tilde{u}^{1,\nu}_n)-\CZ(\tilde{u}^{1,\nu}_m)=2$ and the minimal Chern number is $2k$, it follows that the underlying fixed points $u^{1,\nu}_m$ and $u^{1,\nu}_n$ need to be different. By letting $\nu$ converge to zero, and hence $H^{\nu}_t$ converge to $H_t$, it follows that $u^1_m\neq u^1_n$, since there still must exist a nontrivial Floer curve which intersects the representative of $[\CP^{2k-1}]$. \\

On the other hand, for $n>k$ it follows that the fixed points $u^0_n$ of the free Schr\"odinger equation are also fixed points of $\phi_1$, where the corresponding Floer curve $\tilde{u}=\tilde{u}_n$ is just the constant curve $\tilde{u}^k_{n,0}(s,t)=u^0_n=u^1_n$, $(s,t)\in\IR\times\IR$. \end{proof}

In the case of finite-dimensional nonlinearities we see that the main theorem can be proven by studying Floer curves in finite-dimensional complex projective spaces. In preparation for the general statement, we first show that everything is independent of the chosen ambient finite-dimensional projective space. For this we want to restrict ourselves to the case where the Hamiltonian $H_t=H^0+F_t$ is regular in the sense that the connected moduli space $\IM^{k,n}$ of tuples $(\tilde{\tilde{u}},T)$ containing $(u^0_n,0)$ from the proof of proposition \ref{curves} is a one-dimensional manifold. For the following statement it is actually crucial to restrict ourselves to this connected component of the moduli space and ignore any other components which are anyways irrelevant for the existence proof of Floer curves for all $T>0$. 

\begin{proposition}\label{Liouville} Assume that $\supp(\hat{\psi})\subset\{-\ell,\ldots,+\ell\}$. Then for every tuple $(\tilde{u},T)$ from $\IM^{k,n}$ we have $\tilde{u}(s,t)\in\CP^{2\ell}\subset\CP^{2k}$ for all $(s,t)\in\IR\times\IR$. \end{proposition}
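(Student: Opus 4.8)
The plan is to show that the finite-dimensional symplectic subspace $\CP^{2\ell}\subset\CP^{2k}$ is invariant under the Floer equation, so that any curve whose asymptotics lie in $\CP^{2\ell}$ (as $u^0_n$ does, since $n\le\ell$ — note we tacitly use $n\le\ell$, which follows because otherwise the statement is about the constant curve) must be entirely contained in it, and then run a connectedness argument over the moduli space $\IM^{k,n}$. First I would make the splitting explicit: write $\IC^{2k+1}=\IC^{2\ell+1}\oplus W$ as a direct sum of symplectic subspaces, where $W$ is spanned by the Fourier modes with $\ell<|m|\le k$, and observe that $\CP^{2\ell}\subset\CP^{2k}$ is a complex, totally geodesic, symplectic submanifold which is \emph{invariant under the linear flow} $\phi^0_t$ by Proposition \ref{free-NLS}. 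Crucially, since $\supp(\hat\psi)\subset\{-\ell,\dots,\ell\}$, Lemma \ref{approx} (applied with $k$ replaced by $\ell$) gives that the symplectic gradient $X^G_t(u)=X^F_t(\phi^0_t u)$ composed appropriately has all Fourier coefficients supported in $\{-\ell,\dots,\ell\}$; hence at every point $u\in\CP^{2\ell}$ the vector field $\nabla G_t(u)$ is tangent to $\CP^{2\ell}$. Therefore the perturbed Cauchy--Riemann operator $\CR^T_G$ preserves maps into $\CP^{2\ell}$, and the constant solution $(u^0_n,0)$ that generates the distinguished component $\IM^{k,n}$ does lie in $\CP^{2\ell}$.

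Next I would upgrade invariance of the \emph{equation} to confinement of \emph{solutions}. Given $(\tilde u,T)\in\IM^{k,n}$, decompose $\tilde u=(\tilde u_\ell,\tilde w)$ according to $\CP^{2\ell}$ and the normal directions; more concretely, work on the sphere $\IS(\IC^{2k+1})$ and lift $\tilde u$ to a map $v:\IR\times\IR\to\IS(\IC^{2k+1})$ satisfying the lifted Floer equation, writing $v=(v_\ell,w)$ with $w$ the projection to $W$. Because $\nabla G$ has no component in $W$ along the whole of $\IC^{2k+1}$ after the relevant projections — here one uses again that $\hat\psi$ is supported in $\{-\ell,\dots,\ell\}$, so the nonlinearity literally does not see the modes in $W$ — the $W$-component $w$ satisfies an honest homogeneous Cauchy--Riemann-type equation $\CR w + (\text{zeroth-order linear term from }\phi^0_t)\,w = 0$ with $w(s,t)\to 0$ as $s\to\pm\infty$. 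A standard argument (energy identity / integration by parts, or a Carleman-type unique continuation, or simply that such a $w$ is pseudo-holomorphic for a linear complex structure with vanishing boundary data) forces $w\equiv 0$, i.e. $\tilde u(s,t)\in\CP^{2\ell}$ for all $(s,t)$.

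Finally, to handle the whole connected moduli space rather than just the component of the constant curve, I would argue by continuity: the subset $\IM^{k,n}_\ell:=\{(\tilde u,T)\in\IM^{k,n}:\ \tilde u(\IR\times\IR)\subset\CP^{2\ell}\}$ is nonempty (it contains $(u^0_n,0)$), it is closed in $\IM^{k,n}$ (being cut out by the $C^0_{\loc}$-closed condition $w\equiv 0$, and $C^\infty_{\loc}$-limits of curves in $\CP^{2\ell}$ stay in $\CP^{2\ell}$), and it is open because the confinement argument above shows that \emph{every} element of $\IM^{k,n}$ lies in $\CP^{2\ell}$ — or, if one prefers a purely local openness statement, because a Floer curve $C^\infty_{\loc}$-close to one contained in $\CP^{2\ell}$ has small $W$-component with the same vanishing asymptotics, hence zero $W$-component by the same unique-continuation step. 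Since $\IM^{k,n}$ is connected, $\IM^{k,n}_\ell=\IM^{k,n}$, which is the claim. I expect the main obstacle to be the rigorous unique-continuation step for $w$: one must be careful that the lifted equation on the sphere genuinely decouples the $W$-component as a \emph{linear} homogeneous Cauchy--Riemann equation (this rests entirely on $\supp(\hat\psi)\subset\{-\ell,\dots,\ell\}$ plus the block-diagonal form of $\phi^0_t$), and that the asymptotic decay $w(s,\cdot)\to 0$ is strong enough — which it is, given the $C^\infty_{\loc}$-convergence to the asymptotic limit $u^0_n\in\CP^{2\ell}$ established in Proposition \ref{curves} — to apply the standard vanishing result for finite-energy solutions with trivial ends.
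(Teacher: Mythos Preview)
Your proposal is correct and follows essentially the same strategy as the paper: use that the normal component of $\nabla G_t$ vanishes (from $\supp(\hat\psi)\subset\{-\ell,\dots,\ell\}$), deduce that the normal part of the Floer curve solves an unperturbed linear Cauchy--Riemann equation with zero asymptotics, conclude it vanishes, and propagate over $\IM^{k,n}$ by connectedness starting from $(u^0_n,0)$.

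The implementations differ in one respect worth noting. The paper does \emph{not} lift to the sphere; instead it uses the bundle structure $\CP^{2k}\setminus D_{k,\ell}\to\CP^{2\ell}$ with fibre $\IC^{2k-2\ell}$, writes $\tilde u=(\tilde u^\ell_\perp,\tilde u^\ell)$ there, and observes that the normal component satisfies the \emph{genuinely unperturbed} equation $\CR\tilde u^\ell_\perp=0$ --- there is no zeroth-order term, because in this formulation the free flow $\phi^0_t$ is absorbed into the periodicity condition $\tilde u(\cdot,t+1)=\phi^0_{-1}(\tilde u(\cdot,t))$ rather than appearing in the equation. Liouville's theorem (equivalently, the minimal-surface property) then gives $\tilde u^\ell_\perp\equiv 0$ directly. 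The price the paper pays is that this bundle picture is only available away from the divisor $D_{k,\ell}$, so the open/closed argument on $\IM^{k,n}$ is genuinely needed to rule out curves touching $D_{k,\ell}$; your sphere lift, by contrast, gives a global orthogonal splitting, which is why you (correctly) observe that the connectedness step becomes somewhat redundant in your version. On the other hand, lifting the Floer equation from $\CP^{2k}$ to $\IS(\IC^{2k+1})$ requires a little care with the $U(1)$-direction, and your ``zeroth-order linear term from $\phi^0_t$'' is an artefact of not having absorbed the free flow into the twisted periodicity --- it disappears in the paper's setup, where the vanishing step is literally Liouville's theorem for bounded entire holomorphic functions.
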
 

\begin{proof} It is known that $\CP^{2k}\backslash D_{k,\ell}$, $D_{k,\ell}:=i_{k,\ell}(\CP^{2k-2\ell-1})$ carries the structure of a $\IC^{2k-2\ell}$-bundle over $\CP^{2\ell}$, where the embedding of $\CP^{2k-2\ell-1}$ into $\CP^{2k}$ consists of all points $[z_{-k}:\ldots:z_{+k}]$ with $z_{-\ell}=\ldots=z_{+\ell}=0$. If one would add the removed embedding of $\CP^{2k-2\ell-1}$ back in, each fibre $\IC^{2k-2\ell}$ would get compactified to $\CP^{2k-2\ell}$ using the Hopf map. We want to emphasize however that the total space $E^{k,\ell}$ of the resulting $\CP^{2k-2\ell}$-bundle over $\CP^{2\ell}$ is \emph{not} diffeomorphic to $\CP^{2k}$; in order to obtain $\CP^{2k}$ we rather would have to assume that all fibres intersect in the removed divisor $D_{k,\ell}$. \\

For the proof of the statement it suffices to prove the following\\

\noindent\emph{Claim: For all $(\tilde{u},T)\in\IM^{k,n}$ for which $\tilde{u}=\tilde{u}^k_{n,T}$ has image in $\CP^{2k}\backslash D_{k,\ell}$ it holds that $\tilde{u}$ has image in $\CP^{2\ell}$.} \\

The statement of the proposition then immediately follows using that $\tilde{u}^k_{n,0}=u^0_n\in\CP^{2n}\subset\CP^{2\ell}$ for $T=0$ and the fact that the connected moduli space $\IM^{k,n}$ is a connected one-dimensional manifold which is continuous with respect to the underlying $H^{1,p}$-norm and hence also the weaker $C^0$-norm. Indeed, considering the subset of tuples $(\tilde{u},T)$ for which $\tilde{u}$ has image in $\CP^{2\ell}$, it follows from continuity that it is closed and, after additionally employing the claim, that it is open as well. \\

For the proof of the claim in the following we now assume that the Floer curve $\tilde{u}$ has image in the total space of the bundle $E^{k,\ell}\to\CP^{2\ell}$. Then we can write the Floer curve as a pair of maps, $$\tilde{u}=(\tilde{u}^{\ell}_{\perp},\tilde{u}^{\ell}):\IR\times\IR\to \IC^{2k-2\ell}\times \CP^{2\ell},$$ where $\tilde{u}^{\ell}$ is the projection onto $\CP^{2\ell}$ and $\tilde{u}^{\ell}_{\perp}$ remembers the normal component. For the latter observe that we can view $\tilde{u}$, $\tilde{u}^{\ell}$ as sections $\check{u}$, $\check{u}^{\ell}$ in the bundles $\IR\times M_{\phi_1^0}\to\RS$ for $M=E^{k,\ell}$, $\CP^{2\ell}$, respectively, where
$$
M_{\phi_1^0}:=\frac{\IR\times M}{(t+1,u)\sim(t,\phi^0_1(u))}
$$
and $\check{u}(s,t)=(s,t,\tilde{u}(s,t))$. Then $\tilde{u}^{\ell}_{\perp}$ can be viewed as a section in the pull-back of the $\IC^{2k-2\ell}$-bundle $\IR\times E^{k,\ell}_{\phi_1^0}\to\IR\times\CP^{2\ell}_{\phi_1^0}$ under the section $\check{u}^{\ell}:\RS\to\IR\times\CP^{2\ell}_{\phi_1^0}$; since $\tilde{u}^{\ell}=u^0_n$ for $T=0$, any section $\tilde{u}^{\ell}_{\perp}$ can be viewed as a smooth map $\IR\times\IR\to\IC^{2k-2\ell}$ satisfying the periodicity condition $\tilde{u}^{\ell}_{\perp}(\cdot,t+1)=\phi^0_{-1}(\tilde{u}(\cdot,t))$. \\

Now the important observation is that, since $G=G^{\ell}=G\circ\pi_{\ell}$, the normal component $\nabla^{\ell}_{\perp} G_t$ of $\nabla G_t$ vanishes. This however implies that the normal component $\tilde{u}^{\ell}_{\perp}$ is truely holomorphic, that is, solves the unperturbed Cauchy-Riemann equation $\CR \tilde{u}^{\ell}_{\perp}=0$. Since $\tilde{u}^{\ell}_{\perp}(s,t)\to 0$ for $s\to\pm\infty$ since $u^0_n\in\CP^{2\ell}$, we can employ Liouville's theorem to show that we have $\tilde{u}^{\ell}_{\perp}=0$, that is, $\tilde{u}=\tilde{u}^{\ell}$. Note that, instead of referring to Liouville's theorem, the result can be viewed as a consequence of the minimal surface property of pseudo-holomorphic curves. \end{proof}

\begin{remark}\label{Liouville-2} The following observations are immediate:
\begin{itemize}
\item[i)] By the same arguments it follows that, even if we first allowed the Floer curve $\tilde{u}$ to live in the infinite-dimensional manifold $\IP(\IH)$, the finite-dimensionality of the nonlinearity ensures that it actually lives in the finite-dimensional submanifold $\CP^{2\ell}$. 
\item[ii)] Along the same lines using Liouville's theorem or the minimal surface property, it is immediate to see that, in the case of $n>\ell$, the Floer curve is constant and the fixed point $u^0_n\in\CP^{2n}$ of the free Schr\"odinger equation thus agrees with the corresponding fixed point $u^1_n$ of the nonlinear Schr\"odinger equation with convolution term. 
\end{itemize}
\end{remark}

\section{From finite to infinite dimensions}

In this section we start with the proof of the main theorem. 

\begin{theorem} Assume that, after descending to the projectivization, the Hofer norm $|||F|||$ of the Hamiltonian defining the nonlinearity is smaller than $\pi/2$. Then for every fixed point $u^0_n$, $n\in\IN$ of the time-one map $\phi^0_1$ of the free Schr\"odinger equation there exists a fixed point $u^1_n$ of the time-one map $\phi_1$ of the given nonlinear Schr\"odinger equation of convolution type, and a Floer curve $\tilde{u}_n:\IR\times\IR\to\IP(L^2(S^1,\IC))$ which connects $u^0_n$ and $u^1_n$. Furthermore all fixed points $u^1_n$, $n\in\IN$ of $\phi_1:\IP(L^2(S^1,\IC))\to\IP(L^2(S^1,\IC))$ are different. \end{theorem}

Until further notice let us fix the natural number $n\in\IN$. Recall that a Floer curve connecting the fixed point $u^0_n$ of the free Schr\"odinger equation with a fixed point $u^1_n$ of the given Schr\"odinger equation of convolution type is a smooth map $\tilde{u}=\tilde{u}_n:\IR\times\IR\to\IP(\IH)$ with $\tilde{u}(\cdot,t+1)=\phi^0_{-1}(\tilde{u}(\cdot,t))$ satisfying the Floer equation $$0\,=\,\CR\tilde{u}\;+\;\varphi(s)\cdot\nabla G_t(\tilde{u}),$$ where $\CR=\del_s+i\del_t$ denotes the standard Cauchy-Riemann operator and $\varphi$ is a smooth cut-off function with $\varphi(s)=0$ for $s\leq -1$ and $\varphi(s)=1$ for $s\geq 0$. It connects $u^0_n$ and $u^1_n$ in the sense that there exist two sequences $(s_{\gamma}^{\pm})$ of real numbers, $s_{\gamma}^{\pm}\to\pm\infty$ with $\tilde{u}_n(s_{\gamma}^-,\cdot)\to u^0_n$, $\tilde{u}_n(s_{\gamma}^+,\cdot)\to u^1_n$ as $\gamma\to\infty$ in the $C^0$-sense; the latter weaker asymptotic condition is a consequence of the fact that we do \emph{not} want to assume that the nonlinearity is generic in the sense that all orbits are isolated. \\

As mentioned in section $3$, as a starting point we make use of the fact, proven in lemma \ref{approx} and lemma \ref{exp}, that the infinite-dimensional nonlinearity $G_t=F_t\circ\phi^0_t$ can be uniformly approximated by the finite-dimensional Hamiltonians $G^k_t=F^k_t\circ\phi^0_t$, together with the fact that for every finite-dimensional Hamiltonian $G^k$ Floer curves are known to exist. In what follows we want to assume without loss of generality that each of finite-dimensional Hamiltonians $F^k_t=F_t\circ\pi_k$ is regular in the sense that the resulting moduli space $\IM^{k,n}$ is a one-dimensional manifold. If this is not the case, then we redefine $F^k_t:=F^{k,\nu}_t=F_t\circ\pi_k+\nu^k_t$ where the time-dependent perturbation $\nu^k_t$ is chosen to decay exponentially fast with $k\in\IN$ in order to ensure that the statement of lemma \ref{approx} still holds.\\

Choose $T_k:=k$ for all $k\in\IN$. By proposition \ref{curves}, for every $n\leq k$ there exists a Floer curve $\tilde{u}^k=\tilde{u}^k_n=\tilde{u}^k_{n,T_k}:\IR\times\IR\to\CP^{2k}$ for the finite-dimensional time-dependent Hamiltonian $G^k$ satisfying the periodicity condition $\tilde{u}^k(\cdot,t+1)=\phi^0_{-1}(\tilde{u}^k(\cdot,t))$, the asymptotic condition $\tilde{u}^k_n(s,\cdot)\to u^0_n$ as $s\to\pm\infty$, and the perturbed Cauchy-Riemann equation $$0=\CR\tilde{u}^k + \varphi_{T_k}(s)\cdot \nabla G^k_t(\tilde{u}^k).$$ Furthermore, we have that the energy $E(\tilde{u}^k)$ defined by $$E(\tilde{u}^k):=\int_{-\infty}^{+\infty}\int_0^1 \frac{1}{2}\big(|\del_s\tilde{u}^k|^2 + |\del_t\tilde{u}^k - \varphi_{T_k}(s)\cdot X^{G,k}(\tilde{u}^k)|^2\big)\;dt\;ds$$ is bounded by $2 |||G^k|||$, which is strictly less than the minimal energy of a holomorphic sphere in $\IP(\IH)$ for $k$ sufficiently large. \\

We are going to show that, possibly after passing to a subsequence, the sequence of Floer curves $\tilde{u}^k=\tilde{u}^k_n:\IR\times\IR\to\CP^{2k}\subset\IP(\IH)$ will converge in the $C^{\infty}_{\loc}$-sense to a Floer curve $\tilde{u}=\tilde{u}_n:\IR\times\IR\to\IP(\IH)$ for the infinite-dimensional time-dependent Hamiltonian $G$ as in the main theorem. Our proof will rely on bubbling-off analysis in infinite dimensions as well as a result from the theory of diophantine approximations in order to deal with the small divisor problem. \\

\section{Bounded derivatives using bubbling-off analysis}

As a first step we prove the following   

\begin{proposition}\label{no-bubbling} For all $\alpha\in\IN$ the $C^{\alpha}$-norm of the maps $(\tilde{u},T)\in\IM^{k,n}$, $k\in\IN$ is uniformly bounded. In other words, we have $$\sup_k \|\tilde{u}\|_{C^{\alpha}}\,<\,\infty\,\,\textrm{for all}\,\,\alpha\in\IN, $$ where the supremum is taken over all Floer curves in $\IM^{k,n}$ and all dimensions $k\in\IN$. \end{proposition}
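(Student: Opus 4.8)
The plan is to run the standard Gromov--Floer bubbling-off argument, but carried out \emph{uniformly in the dimension $k$} and in the target $\IP(\IH)$ rather than in a fixed compact manifold. Suppose, for contradiction, that the gradients $|\del\tilde u^k|$ (equivalently $|\del_s\tilde u^k| + |\del_t\tilde u^k - \varphi_{T_k}(s)X^{G,k}(\tilde u^k)|$, after absorbing the uniformly bounded perturbation term $\varphi_{T_k}\cdot X^{G,k}$) are \emph{not} uniformly bounded. Then there exist points $z_k=(s_k,t_k)$ at which $R_k:=|\del\tilde u^k(z_k)|\to\infty$. I would apply Hofer's lemma to replace $z_k$ by nearby points where the gradient blow-up is ``almost maximal'' on a ball of radius $\sim 1/R_k$ times a slowly growing factor. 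Rescaling by $R_k$ around these points --- and here one uses crucially that $X^{G,k}$ together with all its derivatives is uniformly bounded (Lemma \ref{approx}, Lemma \ref{exp}), so the rescaled equation converges to the \emph{unperturbed} Cauchy--Riemann equation --- one obtains in the limit a nonconstant finite-energy holomorphic plane $\IC\to\IP(\IH)$, which by removal of singularities extends to a nonconstant holomorphic sphere $v\colon\CP^1\to\IP(\IH)$. Its symplectic area is at least the minimal energy of a holomorphic sphere in $\IP(\IH)$, which by the Fubini--Study normalization is $\pi$.

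The contradiction then comes from the energy bound: by Proposition \ref{curves} we have $E(\tilde u^k)\le 2|||G^k|||$, and by Lemma \ref{approx} (or the remark after the main theorem) $|||G^k|||\le|||F|||<\pi/2$ once $k$ is large, so $E(\tilde u^k)<\pi$ uniformly. Since the energy is nonnegative and, by the usual argument, a definite portion $\ge\pi$ of it concentrates near the bubble point in the limit, this contradicts $E(\tilde u^k)<\pi$. (One must be slightly careful that the homotopy class is trivial --- the Floer curve is homotopic to the constant curve $u^0_n$ --- so that the topological energy equals the geometric energy and no ``area cancellation'' occurs; this is exactly as in the proof of Proposition \ref{curves}.) This establishes $\sup_k\|\del\tilde u^k\|_{C^0}<\infty$, i.e.\ the $C^1$-bound.

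To pass from the $C^1$-bound to the $C^\alpha$-bound for all $\alpha$, I would use elliptic bootstrapping on the perturbed Cauchy--Riemann equation $\CR\tilde u^k=-\varphi_{T_k}(s)\nabla G^k_t(\tilde u^k)$. In any fixed coordinate chart on $\IP(\IH)$ --- and here the point is that, because the gradients are uniformly bounded, the images $\tilde u^k$ stay in a uniformly bounded region and one can cover it by a fixed finite atlas with uniform chart constants --- the right-hand side has, by Lemma \ref{exp}, $C^\alpha$-norm bounded in terms of the $C^{\alpha-1}$-norm of $\tilde u^k$ (the $C^\alpha$-norms of $G^k$ being uniformly bounded in $k$), with interior elliptic estimates for $\CR$ giving $\|\tilde u^k\|_{C^{\alpha+1,\text{loc}}}\lesssim \|\tilde u^k\|_{C^{\alpha}} + \|\text{RHS}\|_{C^{\alpha}}$ with constants independent of $k$. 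Inducting on $\alpha$, starting from the uniform $C^1$-bound just obtained and using the periodicity $\tilde u^k(\cdot,t+1)=\phi^0_{-1}(\tilde u^k(\cdot,t))$ together with the fact that $\phi^0_{-1}$ is a fixed bounded operator preserving $\IC^{2k+1}$, one gets $\sup_k\|\tilde u^k\|_{C^\alpha}<\infty$ for every $\alpha$.

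The main obstacle --- and the whole reason this is not routine --- is the first step in infinite dimensions: one must be sure that the rescaled limit really is a \emph{nonconstant} holomorphic sphere in $\IP(\IH)$ carrying at least the minimal area $\pi$, despite the non-compactness of the target. This requires (i) that the perturbation term genuinely disappears in the rescaling limit, which is where the uniform bound on $X^{G,k}$ and \emph{all} its derivatives (Lemmas \ref{approx}, \ref{exp}) is used; (ii) a removal-of-singularities theorem valid in the Hilbert manifold $\IP(\IH)$ (this holds because $\IP(\IH)$, while non-compact, is a homogeneous Kähler manifold with bounded geometry, so the local arguments go through verbatim); and (iii) the lower bound $\pi$ on the area of a nonconstant holomorphic sphere in $\IP(\IH)$, which follows from the Fubini--Study normalization recorded after Proposition \ref{projective} together with the fact that every holomorphic sphere, being energy-minimizing in its homotopy class, has area a positive integer multiple of $\pi$ (using $\pi_2(\IP(\IH))=\IZ$). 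Once these three points are in place, the uniform energy bound $E(\tilde u^k)<\pi$ does the rest.
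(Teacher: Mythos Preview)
Your bootstrapping step from the $C^1$-bound to all $C^\alpha$-bounds is essentially the same as the paper's (the paper phrases it via $H^{\alpha,p}$-norms and Sobolev embedding, but the content is identical).

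The gap is in the $C^1$-step. Your plan is to rescale and pass to a limiting nonconstant holomorphic sphere in $\IP(\IH)$, then invoke the area quantization $\geq\pi$. But ``obtains in the limit'' is precisely what fails in infinite dimensions: the rescaled maps $\tilde v^k$ live in $\CP^{2k}\subset\IP(\IH)$ with $k\to\infty$, and even with the uniform bound $|\del\tilde v^k|\leq 1$ you cannot run Arzel\`a--Ascoli because closed bounded sets in $\IP(\IH)$ are not compact. Your items (i)--(iii) concern what happens \emph{after} a limit exists (the perturbation vanishes, singularities can be removed, area is quantized), but none of them produces the limit. Bounded geometry of $\IP(\IH)$ gives you uniform local estimates, not sequential compactness of maps. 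This is exactly why the paper writes that it wants ``to circumvent the corresponding generalization of the underlying Gromov compactness statement to infinite dimensions.''

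The paper's workaround never takes a limit of curves. Working entirely at finite $k$, it shows: (a) by an energy/length argument there is a radius $\rho_k\in[\sqrt{C_k}/2,\sqrt{C_k}]$ on which the boundary loop of the rescaled disk has length $\to 0$; (b) this short loop can be capped by a small filling disk using the exponential map, with isoperimetric constant \emph{independent of $k$} (homogeneity of $\CP^{2k}$); (c) the glued sphere has symplectic area in $\pi\IZ$ but strictly $<\pi$, hence $=0$, so $E_\omega(\tilde v^k_{\rho_k})\to 0$; (d) a mean-value inequality (proved directly from $\Delta\del_s\tilde v^k\to 0$) then forces $|\del_s\tilde v^k(0)|\to 0$, contradicting $|\del_s\tilde v^k(0)|=1$. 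This argument uses only quantities computable at each fixed $k$ with $k$-independent constants, so no infinite-dimensional compactness is needed. If you want to salvage your approach, you would have to supply a genuine compactness mechanism for the rescaled bubbles (e.g.\ control of their normal components analogous to Proposition~\ref{step1}), but after rescaling the periodicity in $t$ is destroyed and the small-divisor analysis does not apply; the paper's direct argument is the cleaner route.
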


For the proof we use an infinite-dimensional version of the classical bubbling-off argument from (\cite{MDSa}, chapter 4) together with elliptic regularity from (\cite{MDSa}, appendix B). Although the proof follows the lines of its finite-dimensional counterpart, the crucial observation is that the bubbling-off argument can indeed be adapted to the infinite-dimensional setting. 

\begin{lemma}\label{C^1-bound} The first derivatives $\del_s\tilde{u}(s,t)$, $\del_t\tilde{u}(s,t)$ can be uniformly bounded, that is, $$\sup_k \|T\tilde{u}\|_{\infty}\,<\,\infty.$$ \end{lemma}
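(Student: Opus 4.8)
The plan is to run the classical bubbling-off argument of McDuff--Salamon in the infinite-dimensional target $\IP(\IH)$. Suppose, for contradiction, that $\sup_k \|T\tilde u^k\|_\infty = \infty$; then we may choose points $(s_k,t_k)\in\IR\times\IR$ with $R_k := |T\tilde u^k(s_k,t_k)| \to \infty$. The first point to address is that the relevant estimates are \emph{local} in the target: although $\IP(\IH)$ is not compact, a Floer curve with energy bounded by $2\,|||G^k|||<\pi$ stays, on any compact region of the domain, inside a region of $\IP(\IH)$ whose geometry (injectivity radius, curvature bounds, and the $C^\alpha$-norms of $G^k_t$ and its gradient) is controlled uniformly in $k$ by Lemma~\ref{approx}, Lemma~\ref{exp} and Remark~\ref{intrinsic}. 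Using the periodicity $\tilde u^k(\cdot,t+1)=\phi^0_{-1}(\tilde u^k(\cdot,t))$ and the $L^2$-isometry property of $\phi^0_t$ from Proposition~\ref{free-NLS}, it suffices to work on a strip $\IR\times[0,1]$; and since $\nabla G^k_t$ is uniformly bounded, the perturbation term cannot cause the blow-up, so the blow-up must come from the genuinely holomorphic part of the equation.

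Next I would perform the Hofer-type rescaling. Applying Hofer's lemma, after slightly moving $(s_k,t_k)$ and shrinking by a factor $\eps_k\to 0$ with $\eps_k R_k\to\infty$, we obtain rescaled maps $v_k(z) := \tilde u^k(s_k+z/R_k,\,t_k+\cdot)$ defined on larger and larger disks, with $|Tv_k(0)|=1$ and $|Tv_k|\le 2$ on the disk of radius $\eps_k R_k$. Because $R_k\to\infty$, the rescaled equation has its perturbation term $\varphi_{T_k}(s_k+z/R_k)\cdot R_k^{-1}\nabla G^k_t$ tending to $0$ in $C^\alpha_{\loc}$ (using again the uniform bounds on $\nabla G^k$ and its derivatives from Lemma~\ref{exp}), so in the limit $v_k$ converges in $C^\infty_{\loc}$ to a nonconstant $i$-holomorphic plane $v:\IC\to\IP(\IH)$ of finite energy $\le \pi$, with $|Tv(0)|=1$. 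The compactness needed for this $C^\infty_{\loc}$ limit is elliptic: apply interior elliptic regularity for $\CR$ (\cite{MDSa}, appendix~B) to bootstrap the uniform $C^0$ and $C^1$ bounds to uniform $C^\alpha_{\loc}$ bounds, then Arzel\`a--Ascoli; the target being a Hilbert manifold rather than a finite-dimensional one does not obstruct this, since all estimates are Hilbert-space valued and the local charts of $\IP(\IH)$ around the image have uniformly bounded geometry.

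Then I would invoke removal of singularities (again a purely local, elliptic statement valid in $\IP(\IH)$) to extend $v$ to a nonconstant holomorphic sphere $\bar v:\CP^1=S^2\to\IP(\IH)$ of positive symplectic area $\le\pi$. But by the normalization of the Fubini--Study form recorded after Proposition~\ref{projective}, every nonconstant holomorphic sphere in $\IP(\IH)$ has symplectic area a positive integer multiple of $\pi$, hence area $\ge\pi$; combined with the strict inequality $E(\tilde u^k)\le 2\,|||G^k|||<\pi$ and the fact that the area of the bubble is $\le\limsup_k E(\tilde u^k)<\pi$, this is a contradiction. (Here one uses that the energy is homotopy-invariant along $\IM^{k,n}$ and that $\tilde u^k$ is homotopic rel asymptotics to the constant curve $u^0_n$, as in the proof of Proposition~\ref{curves}.) Therefore no such blow-up sequence exists and $\sup_k\|T\tilde u^k\|_\infty<\infty$.

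The main obstacle is making precise the claim of \emph{uniform geometry} of the relevant part of $\IP(\IH)$: one must check that the constants in the elliptic estimates and in Hofer's lemma, the injectivity radius, and the $C^\alpha$-bounds on the nonlinearity do not degenerate as $k\to\infty$ — in finite dimensions these come for free from compactness of $\CP^{2k}$, but here they must be extracted from the uniform (and uniformly-in-derivatives) convergence $G^k\to G$ of Lemma~\ref{exp} and from the homogeneity of $\IP(\IH)$ under the unitary group, which acts transitively by isometries. Once this uniform-geometry package is in place, the bubbling argument and the elliptic bootstrapping run exactly as in (\cite{MDSa}, chapter~4 and appendix~B), and the energy-quantization contradiction closes the proof of Lemma~\ref{C^1-bound}; Proposition~\ref{no-bubbling} then follows by a further round of interior elliptic bootstrapping applied to the now uniformly $C^1$-bounded family $\tilde u^k$.
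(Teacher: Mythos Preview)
Your approach has a genuine gap at the step where you invoke Arzel\`a--Ascoli to extract a $C^\infty_{\loc}$-limit $v:\IC\to\IP(\IH)$. Uniform $C^\alpha$-bounds on the rescaled maps $v_k$ give equicontinuity, but they do \emph{not} give precompactness of the image in an infinite-dimensional target: a bounded sequence in a Hilbert space need not have a norm-convergent subsequence, so even the constant maps already fail Arzel\`a--Ascoli in $\IP(\IH)$. Your remark that ``the target being a Hilbert manifold rather than a finite-dimensional one does not obstruct this, since all estimates are Hilbert-space valued and the local charts of $\IP(\IH)$ around the image have uniformly bounded geometry'' is exactly where the argument breaks down; bounded geometry controls the constants in elliptic estimates but does not supply the pointwise compactness that Arzel\`a--Ascoli needs. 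In this paper, the compactness required to pass to limits in $\IP(\IH)$ is only obtained \emph{later}, in Proposition~\ref{step1}, via the diophantine estimate on the normal component---and that argument in turn \emph{uses} the uniform $C^\alpha$-bounds from Proposition~\ref{no-bubbling}, so you cannot invoke it here without circularity.

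The paper's proof is designed precisely to sidestep this issue: rather than producing a limiting holomorphic sphere in $\IP(\IH)$, it works entirely with the individual finite-dimensional curves $\tilde{v}^k:B^2_{\sqrt{C_k}}(0)\to\CP^{2k}$ and argues quantitatively. One finds a radius $\rho_k$ at which the boundary loop $\gamma^k_{\rho_k}$ has length tending to zero, caps it off with a small filling disk of area tending to zero (using that the isoperimetric constants on $\CP^{2k}$ are dimension-independent), and observes that the resulting closed surface has symplectic area in $\pi\IZ$ but strictly less than $\pi$, hence area zero. An elementary mean-value/a priori estimate then forces $|\del_s\tilde{v}^k(0)|\to 0$, contradicting $|\del_s\tilde{v}^k(0)|=1$. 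No limit in $k$ is ever taken in the target, so the non-compactness of $\IP(\IH)$ is irrelevant.
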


\begin{proof} In order to show that the supremum norm of $\del_s\tilde{u}$ is bounded, we are now essentially going to use that the energy $E(\tilde{u})$ of $\tilde{u}$ is strictly smaller than the minimal energy of a holomorphic sphere in $\CP^{2k}\subset\IP(\IH)$, at least as long as $k$ is sufficiently large. Under the assumption that the gradient explodes, in contrast to the classical proof from finite dimensions, we are not going to prove the existence of a holomorphic sphere in order to derive a contradiction. In order to circumvent the corresponding generalization of the underlying Gromov compactness statement to infinite dimensions, we instead show that, when the gradient is large enough, a bubble can be formed by adding in a small disk of diameter smaller than the injectivity radius. The latter will be sufficient to derive a contradiction. Note that, since $\CR\tilde{u}+\varphi_T(s)\nabla G^k_t(\tilde{u})=0$ and $\nabla G^k\to\nabla G$ as $k\to\infty$ due to lemma \ref{approx}, a bound for the supremum norm of $\del_s\tilde{u}$ implies that also the supremum norm of $\del_t\tilde{u}$ is bounded. \\

To the contrary, assume that, possibly after passing to a subsequence, there exists $(\tilde{u}^k,T_k)\in\IM^{k,n}$, $\tilde{u}^k=\tilde{u}^k_{n,T_k}$ such that we have that $C_k:=\max\{|\del_s\tilde{u}^k(z)|:z\in\IR\times\IR\}\to\infty$ as $k\to\infty$, where we may additionally assume that $T_k\to T_{\infty}\in\IR^+\cup\{\infty\}$ as $k\to\infty$. For the start choose for every $k\in\IN$ a point $z^k_0$ such that $|\del_s\tilde{u}^k(z^k_0)|=C_k$. In the proof of this lemma the norm refers to the standard Riemannian metric on $\CP^{2k}$; note that establishing a bound for the Riemannian metric on $\CP^{2k}$ establishes a bound in terms of the Euclidean metric on $\IC^{2k}$ after applying the coordinate chart $\varphi_n$. Note that the maximum exists, due to the asymptotic condition. As in the finite-dimensional bubbling-off proof we define $\tilde{v}^k: B^2_{\sqrt{C_k}}(0)\to\CP^{2k}$ by $\tilde{v}^k(z):=\tilde{u}^k(z/C_k+z^k_0)$, so that $|\del_s\tilde{v}^k(0)|=1$ and $|\del_s\tilde{v}^k(z)|\leq 1$ for all $z\in B^2_{\sqrt{C_k}}(0)$.  For each $r\in[0,\sqrt{C_k}]\subset\IR^+$ define $\gamma^k_r: S^1\to\CP^{2k}$ by $\gamma^k_r(\theta):=\tilde{v}^k(re^{i\theta})$. Denote by $\ell$ the map which assigns to each loop $\gamma:S^1\to\CP^{2k}$ its length with respect to the canonical Riemannian metric and $E_{\omega}(v):=\int v^*\omega$ the symplectic area of a disk map $v: B^2_r(0)\to\CP^{2k}$. With this we can formulate the following  \\

\noindent\emph{Claim: For every $k$ there exists some $\rho_k\in[\sqrt{C_k}/2,\sqrt{C_k}]$ such that $\ell(\gamma^k_{\rho_k})\to 0$. Furthermore, for sufficiently large $k$, we have for the symplectic area of the restricted map $\tilde{v}^k_{\rho_k}=\tilde{v}^k:B^2_{\rho_k}(0)\to\CP^{2k}$ that $E_{\omega}(\tilde{v}^k_{\rho_k})\leq 2|||G_k|||<\pi$ and the a priori estimate $|\del_s\tilde{v}^k(0)|^2 < E_{\omega}(\tilde{v}^k_{\epsilon})/(\epsilon^2\pi)$ holds for $\eps>0$ sufficiently small.}\\

Setting $\tilde{w}^k(r,\theta)=\tilde{v}^k(r e^{i\theta})$ and using the finiteness of the $C^1$-norm of $G_k$, one shows that $$E_{\omega}(\tilde{v}^k)\,-\,\int_{B^2_{1/\sqrt{C_k}}(z_0^k)} \frac{1}{2}\big(|\del_s\tilde{u}^k|^2+|\del_t\tilde{u}^k-\varphi_{T_k}(s)X^{G,k}_t(\tilde{u}^k)|^2\big)\;ds\;dt\,\to\, 0$$ and hence $$\int_0^{\sqrt{C_k}}\int_0^{2\pi} |\del_{\theta}\tilde{w}^k|^2\;rd\theta\;dr\,-\,\int_{B^2_{1/\sqrt{C_k}}(z_0^k)}\frac{1}{2}\big(|\del_s\tilde{u}^k|^2+|\del_t\tilde{u}^k-\varphi_{T_k}(s) X^{G,k}_t(\tilde{u}^k)|^2\big)\;ds\;dt$$ converges to $0$ as $C_k\to\infty$. Together with $E(\tilde{u}^k)\leq 2|||G_k|||$ and Cauchy-Schwarz, this implies that $$(2\pi)^{-1}\cdot \int_{\sqrt{C_k}/2}^{\sqrt{C_k}}\Big(\int_0^{2\pi}|\del_{\theta}\tilde{w}^k|d\theta\Big)^2\;rdr\,\leq\,\int_{\sqrt{C_k}/2}^{\sqrt{C_k}}\int_0^{2\pi} |\del_{\theta}\tilde{w}^k|^2\;d\theta\;rdr<\pi$$ for $k$ sufficiently large. In particular, by setting $$\ell^k_{\min}:=\min\{\ell(\gamma^k_r):\;r\in [\sqrt{C_k}/2,\sqrt{C_k}]\},$$ it follows that $\ell^k_{\min}\leq \sqrt{2\pi^2/(\sqrt{C_k}/2))}\to 0$ as $k\to\infty$; in other words, for every $\eps>0$ there exists $k_0\geq n$ such that $\ell^k_{\min}<\eps$ if $k\geq k_0$. \\

For the result on the symplectic area it suffices to observe that $E_{\omega}(\tilde{v}^k)< 2|||G_k|||<\pi$ for $k$ sufficiently large. On the other hand, for the a priori estimate, we start by observing that $$\CR\tilde{v}^k\,=\,-C_k^{-1}\varphi_{T_k}(s)\nabla G^k_t(\tilde{v}^k).$$ Using $\Delta=\del\circ\CR$ with $\del=\del_s-i\del_t$ it follows that $$\Delta \del_s \tilde{v}^k = (\del\circ\del_s)(-C_k^{-1}\varphi_{T_k}(s)\nabla G^k_t(\tilde{v}^k))\,\to\, 0\,\,\textrm{as}\,\,k\to\infty.$$ On the other hand, setting $v:=\del_s\tilde{v}^k$, it follows from the divergence theorem that for every $r>0$ sufficiently small $$\frac{\del}{\del r} \Big(\int_0^{2\pi} v(r e^{i\theta})\;d\theta\Big)\,=\,\frac{1}{r}\int_{B_{r}(0)} \Delta v(z)\;dz\,\to\, 0\,\,\textrm{as}\,\,k\to\infty.$$ But this implies that $$v(0) - \frac{1}{\eps^2\pi}\int_{B_{\eps}(0)} v(z)\;dz \,\to\, 0\,\,\textrm{as}\,\,k\to\infty,$$ which together with $$\frac{1}{\eps^2\pi}\Big|\int_{B_{\eps}(0)} v(z)\;dz\Big|\,\leq\,\frac{1}{\eps\pi^{1/2}}\Big(\int_{B_{\eps}(0)} |v(z)|^2\;dz\Big)^{1/2}\,\leq\,\frac{1}{\eps\pi^{1/2}}\|\del_s \tilde{v}\|_2$$ implies the claim. \\

In order to finish the proof of the lemma we observe that, due to the fact that the length of $\gamma^k_{\rho_k}$ converges to zero, for $k$ sufficiently large there exists a filling disk $\tilde{\gamma}^k_{\rho_k}: B^2_{1/\rho_k}(0)\to\CP^{2k}$ with $E_{\omega}(\tilde{\gamma}^k_{\rho_k})\to 0$ as $k\to\infty$: Indeed it is shown in remark 4.4.2 in \cite{MDSa} that, when the length of $\gamma: S^1\to\CP^{2k}$ is smaller than half of the injectivity radius, the map $\gamma$ has a canonical local extension to a map $\tilde{\gamma}$ from the disk defined using the exponential map; further it is shown there that there exist $\ell_{\max}>0$ and $c>0$ such that $E_{\omega}(\tilde{\gamma})\leq c\ell(\gamma)^2$ if $\ell(\gamma)\leq\ell_{\max}$. Even more important, due to the symmetries of the canonical Riemannian metric on $\CP^{2k}$ (and the fact that the embedding of $\CP^{2\ell}$ into $\CP^{2k}$ respects the metric for $\ell\leq k$), it follows that the constants $\ell_{\max}>0$ and $c>0$ are actually independent of the complex dimension $2k$.\\ 

Since $\tilde{v}^k_{\rho_k}$ and $\tilde{\gamma}^k_{\rho_k}$ match on their boundaries, it follows that $E_{\omega}(\tilde{v}^k_{\rho_k})+E_{\omega}(\tilde{\gamma}^k_{\rho_k})=m\pi$ for some $m\in\IN$. But since $E_{\omega}(\tilde{v}^k_{\rho_k})+E_{\omega}(\tilde{\gamma}^k_{\rho_k})<\pi$ for $k$ sufficiently large, it follows that $m=0$, in particular, $E_{\omega}(\tilde{v}^k_{\rho_k})\to 0$ as $k\to\infty$. Applying now the a priori estimate $|\del_s\tilde{v}^k(0)|^2 < E_{\omega}(\tilde{v}^k_{\epsilon})/(\epsilon^2\pi)$ for $k$ sufficiently large, it follows that $\del_s\tilde{v}^k(0)\to 0$ - in contradiction to $|\del_s\tilde{v}^k(0)|=1$. \end{proof}

\begin{proof}\emph{(of proposition \ref{no-bubbling})} With the help of the above lemma, we can now give the proof of proposition \ref{Floer-smooth}. In order to keep the setup sufficiently simple, we will assume that $\tilde{u}^k=\tilde{u}^k_{n,T_k}$ has image in the image of the corresponding coordinate chart $\varphi_n:\IC^{2k}\to\CP^{2k}$ with $\varphi_n(0)=u^0_n$, and we will identify $\tilde{u}^k$ with the map $\varphi_n\circ\tilde{u}^k$ with image in $\IC^{2k}$. We stress that the symplectomorphism $\phi^0_1$ maps the coordinate neighborhood to itself and preserves the norm. For the proof we consider a family of bounded open subset of $\IR^2$ obtained by translations with respect to $s\in\IR$ and all norms are understood after restricting the maps $\tilde{u}^k$ to these bounded open subsets. Using the translations with respect to $s\in\IR$ as well as $\tilde{u}(\cdot,t+1)=\phi^0_{-1}(\tilde{u}(\cdot,t))$ we will show that the $C^{\alpha}$-norm of $\tilde{u}^k$ is bounded uniformly in $k\in\IN$. \\

Since from the lemma we know that the maximum norms of $\del_s\tilde{u}^k$ and $\del_t\tilde{u}^k$ are bounded (uniformly in $k$), we already know that $\|\tilde{u}^k\|_{C^1}$ is bounded. In order to show that $\|\tilde{u}^k\|_{C^{\alpha}}$ is bounded for all $\alpha\in\IN$, we apply the classical elliptic regularity result, together with lemma \ref{approx}. For every $s\in\IR$ we consider the restriction of $\tilde{u}^k$ to the bounded subset $(s-\Delta s, s+\Delta s)\times (-1,1)\subset\IR^2$ for some fixed $\Delta s>0$. Fix some $p>2$ and introduce for every $\alpha\geq 1$ the Sobolev $H^{\alpha,p}$-norm $\|\cdot\|_{\alpha,p}=\|\cdot\|_{H^{\alpha,p}}$; note that here we restrict the map to the chosen bounded open subset. By the well-known Sobolev embedding theorem relating the Sobolev $H^{\alpha,p}$-norms with the $C^{\beta}$-norms for different $\alpha,\beta\in\IN$, note that for all $\beta\leq\alpha-2/p$ we have $$\|\tilde{u}^k\|_{C^{\beta}}\leq c_0\cdot\|\tilde{u}^k\|_{H^{\alpha,p}}\,\,\textrm{with a constant}\,\,c_0>0,$$ where the constant $c_0=c_0(\alpha,p)$ is independent of the dimension of the target space. \\

We now prove by induction that $\|\tilde{u}^k\|_{\alpha,p}$ is bounded for all $\alpha\geq 1$. For the induction start, note that the bound on $\|\tilde{u}^k\|_{C^1}$ implies that $\|\tilde{u}^k\|_{H^{1,p}}$ is bounded; note that this is the point where it is crucial that we first restrict $\tilde{u}^k$ to a bounded open subset. For the induction step, let us assume that $\|\tilde{u}^k\|_{\alpha,p}$ is uniformly bounded in $k$. Note that $\CR\tilde{u}^k=-\varphi_{T_k}(s)\cdot\nabla G^k_t(\tilde{u}^k)$ is equivalent to $\CR\tilde{u}^k=\eta^k$ with $\eta^k=-\varphi_{T_k}(s)\cdot \nabla G^k_t(\tilde{u}^k)$; in particular $\|\eta^k\|_{\alpha,p}$ is bounded if and only if the $H^{\alpha,p}$-norm of $\nabla G^k(\tilde{u}^k)$ is bounded with $\nabla G^k(\tilde{u}^k)(s,t)=\nabla G^k_t(\tilde{u}^k(s,t))$. On the other hand, viewing $\nabla G^k(\tilde{u}^k)$ as composition of the maps $\check{u}^k:(s,t)\mapsto (s,t,\tilde{u}^k(s,t))$ and $\nabla G^k:(s,t,u)\mapsto\nabla G^k_t(u)$, we can use (\cite{MDSa}, appendix B) to deduce that $$\|\nabla G^k(\tilde{u}^k)\|_{H^{\alpha,p}}\leq c_1\cdot \|\nabla G^k\|_{C^{\alpha}}\cdot(1+\|\check{u}^k\|_{L^\infty}^{\alpha-1})(1+\|\check{u}^k\|_{H^{\alpha,p}})$$ with a constant $c_1>0$ which is independent of the dimension of the target space. Note that here we view $\nabla G^k$ as a map from $\IR\times\CP^{2k}$ to $\CP^{2k}$ given by $\nabla G^k(t,u)=\nabla G^k_t(u)$; in particular, the $C^{\alpha}$-norm also contains $t$-derivatives of $t\mapsto \nabla G^k_t$. Since by lemma \ref{approx} we have for all $\alpha\in\IN$ that $\|\nabla G^k\|_{C^{\alpha}}\to\|\nabla G\|_{C^{\alpha}}$ as $k\to\infty$, it follows that $\|\nabla G^k\|_{C^{\alpha}}$ is bounded. Since $\|\tilde{u}^k\|_{\alpha,p}$ is bounded, it follows that the $H^{\alpha,p}$-norm of $\nabla G^k_t(\tilde{u}^k)$ and hence $\|\eta^k\|_{\alpha,p}$ is bounded. In order to complete the induction step, we apply the local regularity for the $\CR$-operator in (\cite{MDSa}, theorem B.3.4) in order to obtain $$\|\tilde{u}^k\|_{\alpha+1,p}\leq c_2 \big(\|\CR\tilde{u}^k\|_{\alpha,p}+\|\tilde{u}^k\|_p\big),$$ where the constant $c_2>0$ in (\cite{MDSa}, theorem B.3.4) is again independent of the dimension of the target space. Together with the boundedness of $\|\CR\tilde{u}^k\|_{\alpha,p}=\|\eta^k\|_{\alpha,p}$ and $\|\tilde{u}^k\|_p\leq\|\tilde{u}^k\|_{\alpha,p}$ this proves that $\|\tilde{u}^k\|_{\alpha+1,p}$ is still bounded. \end{proof}

\section{Normal component and the small divisor problem} 

Recall from the proof of proposition \ref{Liouville} that, if the Floer curve $(\tilde{u},T)\in\IM^{n,k}$ has image in $\CP^{2k}\backslash D_{k,\ell}$ for some $k\geq\ell$, then we can write $\tilde{u}$ as a pair of maps, $$\tilde{u}=(\tilde{u}^{\ell}_{\perp},\tilde{u}^{\ell}):\IR\times\IR\to \IC^{2k-2\ell}\times \CP^{2\ell},$$ where $\tilde{u}^{\ell}$ is the projection onto $\CP^{2\ell}$ and $\tilde{u}^{\ell}_{\perp}$ remembers the normal component. Furthermore we again denote by $\IM^{n,k}$ the connected moduli space containing $(u^0_n,0)$. \\

In order to be able to show that a sequence of Floer curves $\tilde{u}^k$ converge in the $C^{\infty}_{\loc}$-sense to a Floer curve $\tilde{u}:\IR\times\IR\to\IP(\IH)$ as in the main theorem, the key challenge is to be able to control the normal component $\tilde{u}^{k,\ell}_{\perp}$. We emphasize that the proof of the following proposition crucially relies on a deep result from the theory of diophantine approximations obtained using methods from analytic number theory.   
 
\begin{proposition}\label{step1} There exists some $\ell_0\in\IN$ such that for all $k\geq\ell\geq\ell_0$ every Floer curve $(\tilde{u},T)$ in $\IM^{k,n}$ has image in $\CP^{2k}\backslash D_{k,\ell}$. Moreover, we have $$\sup_{k\geq\ell}\|\tilde{u}^{\ell}_{\perp}\|_{C^\alpha}\cdot\ell^\delta\to 0\,\,\textrm{as}\,\,\ell\to\infty\,\,\textrm{for every}\,\,\alpha,\delta\in\IN,$$ where the supremum is taken over all Floer curves $(\tilde{u},T)\in\IM^{k,n}$, $k\geq\ell$. \end{proposition}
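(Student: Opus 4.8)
The plan is to analyze the normal component $\tilde{u}^{\ell}_\perp$ in Fourier coordinates and show that it is forced to be extremely small by a competition between two rates: the exponential decay of the approximation $G^k \to G^\ell$ (which controls how much the Floer equation "pushes" in the normal directions) and the diophantine lower bound on the small divisors $|\exp(in^2) - 1|$ for $|n| > \ell$. First I would set up the normal equation: writing $\tilde{u} = (\tilde{u}^\ell_\perp, \tilde{u}^\ell)$ on the region where the Floer curve avoids $D_{k,\ell}$, the vanishing of $\nabla^\ell_\perp G_t$ (from $G = G\circ\pi_\ell$, as used in the proof of Proposition~\ref{Liouville}) is no longer exactly true here because the relevant Hamiltonian is the regularizing perturbation $F^{k,\nu}$ and, more importantly, because we now want a \emph{quantitative} statement rather than exact vanishing. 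So I would instead compare with the Hamiltonian $G^\ell$: the normal component satisfies $\CR\tilde{u}^\ell_\perp = -\varphi_{T}(s)\cdot(\nabla G^k_t - \nabla G^\ell_t)^\perp(\tilde{u})$, whose right-hand side is bounded in every $C^\alpha$-norm by $\|\nabla G^k - \nabla G^\ell\|_{C^\alpha}$, which by Lemma~\ref{exp} decays faster than any power of $\ell$ (exponentially), uniformly on bounded subsets — and by Proposition~\ref{no-bubbling} the Floer curves stay in a fixed bounded set with uniformly bounded derivatives, so this input is legitimate.

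Next I would pass to Fourier coefficients in both the space variable $x$ (implicitly, via the components of $\tilde{u}^\ell_\perp \in \IC^{2k-2\ell}$, i.e.\ frequencies $|n| > \ell$) and the time variable $t$. The periodicity condition $\tilde{u}^\ell_\perp(\cdot, t+1) = \phi^0_{-1}(\tilde{u}^\ell_\perp(\cdot,t))$ means the $n$-th component picks up a factor $\exp(-in^2)$ under $t \mapsto t+1$; equivalently, after the standard trick of absorbing the free flow, $\tilde{\tilde{u}}^\ell_\perp(s,t,x) = (\phi^0_t\tilde{u}^\ell_\perp(s,t))(x)$ is genuinely $1$-periodic in $t$ and $2\pi$-periodic in $x$, and solves a perturbed $\CR$-equation with right-hand side still super-polynomially small in $\ell$. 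Expanding $\tilde{\tilde{u}}^\ell_\perp$ as a double Fourier series in $(t,x)$ with coefficients $\hat{a}(s; n, p)$ for $|n|>\ell$, $p \in \IZ$, the operator $\del_s + i\del_t$ acts as $\del_s + i(2\pi i p) = \del_s - 2\pi p$ on the $(n,p)$-mode; but I also need to track the spatial Laplacian term $-\Delta$ coming from conjugating by $\phi^0_t$, which contributes $i n^2$. So each mode satisfies an ODE in $s$ of the form $\del_s \hat{a}(s;n,p) + i(n^2 - 2\pi p)\,\hat{a}(s;n,p) = \hat{b}(s;n,p)$ with the source $\hat{b}$ super-polynomially small in $\ell$, uniformly in $n,p$ and with rapid decay in $n,p$ (again by Lemma~\ref{exp}), and with $\hat{a} \to 0$ as $s \to \pm\infty$ by the asymptotic condition. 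Solving this ODE gives $\hat{a}(s;n,p)$ as a convolution of $\hat{b}$ against the Green's function of $\del_s + i(n^2 - 2\pi p)$, whose $L^\infty$ or $L^1$ norm is controlled by $1/|n^2 - 2\pi p|$ — and here is where the small divisor enters. The quantity $|n^2 - 2\pi p|$ can be tiny, but by the diophantine property of $2\pi$ (more precisely of $1/(2\pi)$, equivalently $\theta = (2\pi)^{-1}$ up to the normalization used in the remarks), we have $|n^2 - 2\pi p| \geq c/|n|^{r}$ for suitable $c, r > 0$; this is exactly the "deep result from analytic number theory" advertised — it follows for instance from the fact that $\pi$ (hence $2\pi$) has finite irrationality measure (Mahler's theorem giving $|\pi - p/q| \geq q^{-42}$, or its modern improvements). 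Thus $\|\hat{a}(\cdot;n,p)\|_\infty \leq C\,|n|^r\cdot\|\hat{b}(\cdot;n,p)\|$, and since $\|\hat{b}\|$ decays faster than $|n|^r\cdot\ell^{-\delta}$ for every $\delta$ (exponential beats polynomial), summing over $n, p$ and differentiating in $s,t$ gives $\sup_{k\geq\ell}\|\tilde{u}^\ell_\perp\|_{C^\alpha}\cdot \ell^\delta \to 0$ for every $\alpha, \delta$.

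For the first assertion — that for $k \geq \ell \geq \ell_0$ every Floer curve in $\IM^{k,n}$ avoids the divisor $D_{k,\ell}$ — I would argue by the same continuity/open-closed dichotomy used in Proposition~\ref{Liouville}: the set of $(\tilde{u},T) \in \IM^{k,n}$ whose image lies in $\CP^{2k}\setminus D_{k,\ell}$ contains $(u^0_n, 0)$ (since $u^0_n \in \CP^{2n} \subset \CP^{2\ell}$, which is disjoint from $D_{k,\ell}$ for $\ell \geq n$), is open by definition, and I must show it is closed. Closedness is where the quantitative estimate pays off: if a sequence of curves in $\CP^{2k}\setminus D_{k,\ell}$ converged (in $C^0$, using the one-dimensional manifold structure of $\IM^{k,n}$ and the uniform $C^\alpha$-bounds of Proposition~\ref{no-bubbling}) to a curve touching $D_{k,\ell}$, then along the approaching curves the normal component $\tilde{u}^\ell_\perp$ would have to approach $0$ somewhere while its "size" in the fibre is exactly what measures distance to $D_{k,\ell}$; but the estimate $\|\tilde{u}^\ell_\perp\|_{C^0} \leq \epsilon(\ell)$ with $\epsilon(\ell)$ uniform over the whole connected component forces the limit curve to still have $\tilde{u}^\ell_\perp$ well-defined and small, hence to still avoid $D_{k,\ell}$ — so for $\ell_0$ large enough (so that $\epsilon(\ell) < $ the distance from $\CP^{2\ell}$ to $D_{k,\ell}$ in the ambient metric, which is bounded below independently of $k$ by the homogeneity of the Fubini–Study metric noted in the proof of Lemma~\ref{C^1-bound}) the dichotomy closes and the whole connected component lies in the complement of the divisor. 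I expect the main obstacle to be the rigorous bookkeeping of the Green's function estimate uniformly in $n, p$ and the dimension $k$ — in particular verifying that the diophantine exponent $r$ beats \emph{all} the polynomial losses coming from the number of modes and from differentiating $\alpha$ times, which is where the exponential (rather than merely fast polynomial) decay in Lemma~\ref{exp} is indispensable and cannot be weakened.
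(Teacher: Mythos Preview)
Your overall strategy matches the paper's proof closely: decompose the normal component into Fourier modes in $(t,x)$, reduce to a scalar ODE in $s$ for each mode, bound the solution using the diophantine lower bound on the small divisor, and then run the open--closed continuity argument along the connected moduli space $\IM^{k,n}$. The identification of the source term as $(\nabla G^k-\nabla G^\ell)^\perp$ and its super-polynomial decay via Lemma~\ref{exp} combined with Proposition~\ref{no-bubbling} is exactly right, as is the closing argument for avoiding $D_{k,\ell}$.

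There is, however, a genuine error in your mode-by-mode ODE which, as written, breaks the key estimate. You write the equation as $\del_s\hat{a}+i(n^2-2\pi p)\,\hat{a}=\hat{b}$ with a purely \emph{imaginary} coefficient and then assert that the Green's function of $\del_s+i\mu$ has norm $\lesssim 1/|\mu|$. That is false: for real $\mu$ the homogeneous solutions $e^{-i\mu s}$ have modulus one, the operator $\del_s+i\mu$ has no bounded inverse on functions vanishing at $\pm\infty$, and there is no $1/|\mu|$ gain whatsoever. The slip is in the Laplacian bookkeeping: after conjugating by $\phi^0_t$ the term entering the Floer equation is the \emph{real} gradient $\nabla H^0=\Delta$, which multiplies the mode $e^{imx}$ by the real scalar $-m^2$, not $im^2$. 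In the paper's (equivalent) formulation one works directly with the twisted-periodic $\tilde{u}^\ell_\perp$ and the self-adjoint operator $A=-i\del_t$ on $L^2_{\phi^0}$; its eigenvalues are the \emph{real} numbers $\lambda^n_{m,p}=2\pi p-m^2+n^2$ (note also the shift $+n^2$ from the projective chart centred at $u^0_n$, which you omit but which does not affect the diophantine bound). The resulting ODE $w'=\lambda w+f$ with real $\lambda\neq 0$ and $w\to 0$ at $\pm\infty$ does satisfy $|w|\le\sqrt{2}\,\|f\|_\infty/|\lambda|$, by an elementary maximum-principle argument (Lemma~\ref{technical} in the paper) rather than a Green's function. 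Once the coefficient is corrected to be real, the rest of your argument goes through and coincides with the paper's.
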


In the above proposition the norm refers to the standard Euclidean norm on $\IC^{2k-2\ell}$, but the statement in particular implies that $$\sup_{k\geq\ell} \sup_{(s,t)\in\IR^2} d(\tilde{u}^{\ell}_{\perp}(s,t),0)\cdot\ell^{\delta}\to 0\,\,\textrm{as}\,\,\ell\to\infty$$ for all $\delta\in\IN$, where $d$ denotes the distance with respect to the standard Riemannian metric on $\CP^{2k-2\ell}$ from $0\in\IC^{2k-2\ell}\subset\CP^{2k-2\ell}$. \\

Similar to the proof of proposition \ref{Liouville}, it suffices to show the following \\

\noindent\emph{Claim: For all $\alpha,\delta\in\IN$ it holds that $\sup_{k\geq\ell} \|\tilde{u}^{\ell}_{\perp}\|_{C^\alpha}\cdot\ell^{\delta}\to 0$ as $\ell\to\infty$, where the supremum is taken over all Floer curves $(\tilde{u},T)\in\IM^{k,n}$ with image in $\CP^{2k}\backslash D_{k,\ell}$, $k\geq\ell$.}\\

In order to see that this implies the statement of the proposition, observe  that by the claim we can find some $\ell_0\in\IN$ such for all $\ell\geq\ell_0$ we have $\sup_{\ell} \sup_{(s,t)\in\IR^2} d(\tilde{u}^{\ell}_{\perp}(s,t),0)\leq \pi/4 < d(\CP^{2\ell},D_{k,\ell})$, where the supremum is again taken only over the Floer curves with image in $\CP^{2k}\backslash D_{k,\ell}$. The statement of the proposition then follows, similar as in the proof of proposition \ref{Liouville}, using that $\tilde{u}_{n,0}=u^0_n\in\CP^{2n}\subset\CP^{2\ell}$ for $T=0$ and the fact that the connected moduli space $\IM^{k,n}$ is a connected one-dimensional manifold which is continuous with respect to the underlying $H^{1,p}$-norm and hence also the weaker $C^0$-norm. Indeed, considering the subset of tuples $(\tilde{u},T)$ for which $\tilde{u}$ has distance at most $\pi/4$ from $\CP^{2\ell}$, it follows from continuity that it is closed and, after additionally employing the claim, that it is open as well.\\

\begin{proof}\emph{(of proposition \ref{step1})} We consider the densly defined operator $A:=-i\del_t$ on $$L^2_{\phi^0}(\IR,\IH)=\{u\in L^2(\IR,\IH)=L^2(\IR\times (\IR/2\pi\IZ),\IC): u(t+1,\cdot)=\phi^0_{-1}(u(t,\cdot))\}.$$ It has a complete basis of eigenfunctions $u_{m,p}$ with eigenvalues $\lambda_{m,p}$ given by $$u_{m,p}(t,x)=\exp(imx)\cdot\exp(i(2\pi p-m^2)t)\,\,\textrm{and}\,\,\lambda_{m,p}=2\pi p-m^2,\,m,p\in\IZ.$$ Note that here we use that the sequence of functions $x\mapsto\exp(imx)$, $m\in\IZ$ is a complete basis of eigenfunctions of the time-one map $\phi^0_1$ on $\IH$ with corresponding eigenvalues given by $\exp(im^2)$, $m\in\IZ$. While all eigenvalues are non-zero, we have  $\inf\{|2\pi p-m^2|:\,m,p\in\IZ\}=0$. This is called the \emph{small divisor problem}. On the other hand, it follows from the theory of diophantine approximations that $$\inf_{p\in\IZ} |p\cdot 2\pi-m^2|\,=\,2\pi\cdot q\cdot \inf_{p\in\IZ} \Big|\frac{1}{2\pi}-\frac{p}{q}\Big| \,\geq\, q\cdot \frac{c}{q^r}\,\geq\,c\cdot m^{-14}$$ with $q=m^2$, some constant $c>0$ and $r>0$ denoting the irrationality measure of $(2\pi)^{-1}$ which equals the one of $\pi$ and is known to be smaller than $8$ following \cite{Sa}. On the other hand, by passing from Hilbert space $\IH$ to projective Hilbert space $\IP(\IH)$, we stress that in the canonical coordinates $\varphi_n$ around $u^0_n$ the induced operator $A=-i\del_t$ on $L^2_{\phi^0}(\IR,\IH)$ has eigenfunctions $u^n_{m,p}$ with eigenvalues $\lambda^n_{m,p}$ given by $$u^n_{m,p}(t,x)=\exp(imx)\cdot\exp(i(2\pi p-m^2+n^2)t)\,\,\textrm{and}\,\,\lambda_{m,p}=2\pi p-m^2+n^2$$ for all $m,p\in\IZ$. In order to see this, it suffices to observe that the time-one map $\phi^0_1$ maps $[\hat{u}]\in\IP(\IH)$ with $\hat{u}(n)=1$ to $[\widehat{\phi^0_1(u)}]\in\IP(\IH)$ with $\widehat{\phi^0_1(u)}(n)=\exp(in^2)$. It follows that $$\inf_{p\in\IZ}|\lambda^n_{m,p}|=\inf_{p\in\IZ}|p\cdot 2\pi-m^2+n^2|\,\geq\, c\cdot (m^2-n^2)^{-7}\,\geq\,\frac{1}{2}\cdot c\cdot m^{-14}$$ for $m\geq\ell\geq n$ sufficiently large. In particular, we emphasize that every subsequence of eigenvalues only converges with polynomial speed to $0$.\\

In order to apply this to our situation, choose some $\ell\geq n$ and consider any Floer curve $(\tilde{u},T)$ from $\IM^{k,n}$, $k\geq\ell$. Since $\tilde{u}:\IR^2\to\CP^{2k}\subset\IP(\IH)$ is smooth by lemma \ref{Floer-smooth}, satisfies the periodicity condition $\tilde{u}(\cdot,t+1)=\phi^0_{-1}(\tilde{u}(\cdot,t))$, the Floer equation $0=\CR\tilde{u}+\varphi_T(s)\cdot\nabla G^k_t(\tilde{u})$, and the asymptotic condition $\tilde{u}(s,\cdot)\to u^0_n$, the corresponding normal component $\tilde{u}^{\ell}_{\perp}:\IR^2\to\IC^{2k-2\ell}\subset\IH/\IC^{2\ell+1}$ defines a smooth map from $\IR$ to $L^2_{\phi^0}(\IR,\IC^{2k-2\ell})\subset L^2_{\phi^0}(\IR,\IH/\IC^{2\ell+1})$ and satisfies the Floer equation $\del_s\tilde{u}^{\ell}_{\perp}=A\tilde{u}^{\ell}_{\perp}+\varphi_T(s)\cdot\nabla^{\ell}_{\perp} G^k_t(\tilde{u})$, as well as the asymptotic condition $\tilde{u}^{\ell}_{\perp}(s,t)\to 0$ as $s\to\pm\infty$. Since the normal bundle is trivialized using the canonical coordinates of $\IP(\IH)$ around $u^0_n$, it follows that for the operator $A=-i\del_t$ on the normal bundle the complete basis of eigenfunctions with corresponding eigenvalues is given by $$u^n_{m,p}(t,x)=\exp(imx)\cdot \exp(i(m^2-n^2+2\pi p)t)\,\,\textrm{and}\,\,\lambda^n_{m,p}=m^2-n^2+2\pi p$$ for all $m,p\in\IZ$, $|m|\geq\ell+1$. After composing $\tilde{u}^{\ell}_{\perp}:\IR\to L^2_{\phi^0}(\IR,\IH/\IC^{2\ell+1})$ with the corresponding Fourier transform, we obtain a smooth map $\widehat{\tilde{u}^{\ell}_{\perp}}:\IR\to\ell^2(\IZ\times\IZ,\IC)$, which in turn defines a countable family of smooth maps $$w_{m,p}:\IR\to\IC,\,\,w_{m,p}(s):=\widehat{\tilde{u}^{\ell}_{\perp}(s)}(m,p)\,\,\textrm{for all}\,\,m,p\in\IZ,\, |m|\geq\ell+1$$ satisfying $$
w'_{m,p}(s)=\lambda^n_{m,p}w_{m,p}(s)+f_{m,p}(s),\,\,\textrm{and}\,\,w_{m,p}(s)\to 0\,\, \textrm{as}\,\,s\to\pm\infty$$
with $f_{m,p}(s):=\widehat{\nabla^{\ell}_{\perp} G^k(\tilde{u})(s)}(m,p)$. Here we view $\nabla^{\ell}_{\perp} G^k(\tilde{u})(s)$ as an element of $L^2_{\phi^0}(\IR,\IH/\IC^{2\ell+1})$ by setting $$\nabla^{\ell}_{\perp} G^k(\tilde{u})(s)(t):=\varphi_T(s)\cdot\nabla^{\ell}_{\perp} G^k_t(\tilde{u}(s,t)).$$ Then by combining lemma \ref{exp} with proposition \ref{no-bubbling} we know that $\nabla^{\ell}_{\perp} G^k(\tilde{u})(s)$ is smooth with respect to the time and the space coordinate with uniformly bounded derivatives, which is equivalent to the fact that the Fourier coefficients are decaying exponentially fast, i.e., for every $\alpha,\delta\in\IN$ there exists $c_{\alpha,\delta}>0$ with $$|f_{m,p}(s)|= \big|\widehat{\nabla^{\ell}_{\perp} G^k(\tilde{u})(s)}(m,p)\big|< c_{\alpha,\delta}\cdot |m|^{-\delta}\cdot |p|^{-\alpha}\,\,\textrm{for all}\,\,m,p\in\IZ,\, |m|\geq\ell+1.$$ Note that, since $\nabla^{\ell}_{\perp} G^k(\tilde{u})(s)$ depends on the $t$-coordinate in two different ways, we indeed also need to use that by proposition \ref{no-bubbling} all derivatives of $\tilde{u}$ with respect to $t$ are uniformly bounded for all $(\tilde{u},T)$ from $\IM^{k,n}$, $k\geq n$; furthermore we assume that the perturbation $\nu^k_t$ in $F^k_t:=F^{k,\nu}_t=F_t\circ\pi_k+\nu^k_t$ is chosen to decay exponentially fast as $k\to\infty$. Together with 
$$\inf_{p\in\IZ}|\lambda^n_{m,p}|=\inf_{p\in\IZ}|m^2-n^2+p\cdot 2\pi|\,\geq\, c\cdot (m^2-n^2)^{-7}\,\geq\,\frac{1}{2}\cdot c\cdot m^{-14}$$ for $m\geq\ell\geq n$ sufficiently large, using lemma \ref{technical} (which can be found after the end of the proof) this implies that for all $\alpha,\delta\in\IN$ we have $$\big|\widehat{\tilde{u}^{\ell}_{\perp}(s)}(m,p)\big|=|w_{m,p}(s)|\leq C_{\delta}\cdot |m|^{14-\delta}\cdot |p|^{-\alpha}\,\,\textrm{with}\,\, C_{\alpha,\delta}=2\sqrt{2}\cdot c_{\alpha,\delta}/c>0$$ for all $s\in\IR$ and $m,p\in\IZ$ with $|m|\geq\ell$ sufficiently large. In particular, note that the constants are independent of the chosen Floer curve $(\tilde{u},T)\in\IM^{k,n}$, $k\geq \ell$. When $\widehat{\nabla^{\ell}_{\perp} G^k(\tilde{u})}=0$ and hence $f_{m,p}=0$, then $w_{m,p}=0$ for all $m,p\in\IZ$,  $|m|\geq\ell+1$, i.e., there is only the trivial solution, which provides an alternative proof for proposition \ref{Liouville}.  Since $\delta\in\IN$ is chosen arbitrarily, it follows from   
$$|\del_t^{\alpha}\tilde{u}^{\ell}_{\perp}(s,t)|^2\,\leq\,\frac{1}{\sqrt{2\pi}}\sum_{|m|=\ell+1}^k\Big(\sum_{p=-\infty}^{+\infty} |\widehat{\tilde{u}^{\ell}_{\perp}(s)}(m,p)|\cdot |p|^\alpha\Big)^2\,\,\textrm{for all}\,\, (s,t)\in\IR^2$$ that we have $$\sup_{k\geq\ell} \|\del_t^{\alpha}\tilde{u}^{\ell}_{\perp}\|_{C^0}\cdot\ell^{\delta}\to 0\,\,\textrm{as}\,\,\ell\to\infty\,\,\textrm{for all}\,\,\alpha,\delta\in\IN.$$  Together with $\del_s\tilde{u}^{\ell}_{\perp}=-i\del_t\tilde{u}^{\ell}_{\perp}+\varphi_T(s)\cdot\nabla^{\ell}_{\perp} G^k_t(\tilde{u})$ and $\sup_{k\geq\ell} \|\nabla^{\ell}_{\perp} G^k_t(\tilde{u})\|_{C^\alpha}\cdot\ell^{\delta}\to 0$ as $\ell\to 0$ for all $\alpha,\delta\in\IN$, it follows that we also can control the $s$-derivatives and hence have $$\sup_{k\geq\ell} \|\tilde{u}^{\ell}_{\perp}\|_{C^\alpha}\cdot\ell^{\delta}\to 0\,\,\textrm{as}\,\,\ell\to\infty\,\,\textrm{for all}\,\,\alpha,\delta\in\IN.$$ Note that in all cases the supremum is taken over all Floer curves $(\tilde{u},T)\in\IM^{k,n}$ with image in $\CP^{2k}\backslash D_{k,\ell}$, $k\geq\ell$. \end{proof}

As mentioned in the proof of proposition \ref{step1}, we finish this section by giving a proof of the following elementary 

\begin{lemma}\label{technical} Let $w=w_R+i w_I:\IR\to\IC$ be a smooth solution of $w'(s) = \lambda w(s) + f(s)$ with $w(s)\to 0$ as $s\to\pm\infty$. If $\lambda\in\IR$ and there exists $c>0$ such that $|f(s)|<c$ for all $s\in\IR$, then $|w(s)|\leq\sqrt{2}\cdot c/|\lambda|$ for all $s\in\IR$. \end{lemma}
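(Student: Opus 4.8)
The plan is to reduce to a scalar linear first-order ODE and apply the variation-of-constants formula, choosing the endpoint of integration according to the sign of $\lambda$. Since $\lambda$ is real the factor $e^{\lambda t}$ is a positive real scalar, so $w$ may be treated as complex-valued throughout and there is no need to separate real and imaginary parts; the factor $\sqrt{2}$ in the statement simply leaves room for the (slightly lossier) variant that does split $w = w_R + i w_I$.

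First, in the case $\lambda > 0$ I would introduce the particular solution $w_0(s) := -\int_s^{+\infty} e^{\lambda(s-\tau)} f(\tau)\, d\tau$, observe by differentiating under the integral that $w_0'(s) = \lambda w_0(s) + f(s)$, and estimate $|w_0(s)| \le c \int_s^{+\infty} e^{-\lambda(\tau-s)}\, d\tau = c/\lambda$ using $|f| < c$. Any solution of $w' = \lambda w + f$ differs from $w_0$ by a multiple of $e^{\lambda s}$; since $w$ is bounded on $[0,+\infty)$ (it tends to $0$ there) and $w_0$ is bounded there by the estimate just obtained, while $e^{\lambda s} \to +\infty$, that multiple must vanish, so $w = w_0$ and hence $|w(s)| \le c/\lambda$. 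The case $\lambda < 0$ is symmetric: one uses $w_0(s) := \int_{-\infty}^s e^{\lambda(s-\tau)} f(\tau)\, d\tau$, estimates $|w_0(s)| \le c/|\lambda|$, and eliminates the homogeneous term $e^{\lambda s}$ (which now blows up as $s \to -\infty$) via boundedness on $(-\infty, 0]$. In both cases $|w(s)| \le c/|\lambda| \le \sqrt{2}\, c/|\lambda|$, which is the claimed bound (in fact a sharper one).

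There is no genuinely hard step here; the only points requiring a word of care are the convergence of the improper integrals defining $w_0$ (immediate since $\lambda \ne 0$ and $f$ is bounded) and the correct elimination of the homogeneous solution — which is precisely where the asymptotic hypothesis on $w$ is used, although boundedness at the single relevant end already suffices. If one prefers to match the notation of the statement literally, applying the two scalar bounds above to the decoupled equations $w_R' = \lambda w_R + \Re f$ and $w_I' = \lambda w_I + \Im f$, together with $|\Re f|, |\Im f| \le |f| < c$, gives $|w_R(s)|, |w_I(s)| \le c/|\lambda|$ and therefore $|w(s)| = (w_R(s)^2 + w_I(s)^2)^{1/2} \le \sqrt{2}\, c/|\lambda|$.
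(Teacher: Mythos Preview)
Your argument is correct, and in fact yields the sharper bound $|w(s)|\le c/|\lambda|$ directly from the integral representation; the $\sqrt{2}$ only reappears if one insists on splitting into real and imaginary parts at the end, as you note.

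The paper proceeds differently. Instead of the variation-of-constants formula, it argues by contradiction on one real component: assuming $|w(s_0)|>\sqrt{2}\,c/|\lambda|$ forces (say) $w_R(s_0)>c/\lambda$; then the intermediate value theorem and the mean value theorem produce a point $s_2$ where $w_R'(s_2)<0$ while $\lambda w_R(s_2)+f_R(s_2)>0$, a contradiction. This is more elementary in that it avoids improper integrals and the explicit solution formula, but it is also where the factor $\sqrt{2}$ enters, since one passes from $|w|$ to $\max(|w_R|,|w_I|)$. Your approach is cleaner and sharper; the paper's approach has the minor advantage of making the mechanism (exponential growth overwhelming a bounded forcing term) visible without writing down the Duhamel integral, and of using the asymptotic condition in exactly the same qualitative way. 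Both proofs use the decay at only one end (the one where $e^{\lambda s}$ blows up), so neither needs the full two-sided hypothesis.
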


\begin{proof} The idea of the proof is that, provided $w(s)$ is too large in norm, then the exponential growth in the positive (if $\lambda>0$) or negative (if $\lambda<0$) $s$-direction can no longer be damped by the nonlinearity $f(s)$ in order to achieve $w(s)\to 0$ as $s\to\pm\infty$. To the contrary assume that there exists $s_0\in\IR$ with $|w(s_0)|>\sqrt{2}\cdot c/|\lambda|$. Assume without loss of generality that $|w_R(s_0)|\geq |w_I(s_0)|$ and $w_R(s_0)>0$ and $\lambda>0$; the case when one or both are negative will lead to obvious changes in the proof. Since $w_R(s_0)>c/\lambda$ and $w_R(s)\to 0$ as $s\to +\infty$, it follows from the intermediate value theorem that we can find $s_0<s_1<+\infty$ with $w_R(s_1)=c/\lambda$ and $w_R(s)> c/\lambda$ for all $s_0<s<s_1$. Since $w_R(s_1)<w_R(s_0)$, using the mean value theorem we find $s_0<s_2<s_1$ with $w'_R(s_2)<0$. On the other hand, since $s_0<s_2<s_1$ we have  $w_R(s_2)> c/\lambda$ and hence $\lambda w_R(s_2)>c$, which together with $|f_R(s)|\leq |f(s)| < c$ implies $\lambda w_R(s_2)+f_R(s_2)>0$, providing the required contradiction. \end{proof}
\section{Completing the proof}

We show now that the sequence of Floer curves $\tilde{u}^k=\tilde{u}^k_{n,T_k}:\IR\times\IR\to\CP^{2k}\subset\IP(\IH)$, $k\geq n$ converges in the $C^{\infty}_{\loc}$-sense to a Floer curve $\tilde{u}:\IR\times\IR\to\IP(\IH)$ as in the main theorem, possibly after passing to a suitable subsequence. As in the finite-dimensional case, the idea is to make use of elliptic bootstrapping to find a limit in the $C^{\infty}_{\loc}$-sense. While we have already proven in proposition \ref{no-bubbling}, using bubbling-off analysis in $\IP(\IH)$, that all the derivatives of $\tilde{u}^k$ are uniformly bounded as $k$ converges to infinity, note that this is not sufficient to establish the existence of a $C^{\infty}_{\loc}$-limit due to the non-compactness of $\IP(\IH)$. On the other hand, we show below that the result in proposition \ref{step1} about the normal component, proven using the diophantine approximation result, provides us with the missing piece. 

\begin{lemma}\label{Floer-smooth} After passing to a suitable subsequence, the sequence of Floer curves $\tilde{u}^k=\tilde{u}^k_{n,T_k}:\IR\times\IR\to\CP^{2k}\subset\IP(\IH)$ with $T_k=k$ converges in the $C^{\infty}_{\loc}$-sense to a smooth map $\tilde{u}=\tilde{u}_n:\IR\times\IR\to\IP(\IH)$ with $\tilde{u}_n(\cdot,t+1)=\phi^0_{-1}(\tilde{u}_n(\cdot,t))$ satisfying the Floer equation $$0\,=\,\CR\tilde{u}_n\;+\;\varphi(s)\cdot\nabla G_t(\tilde{u}_n),$$ where $\CR=\del_s+i\del_t$ denotes the standard Cauchy-Riemann operator and $\varphi$ is a smooth cut-off function with $\varphi(s)=0$ for $s\leq -1$ and $\varphi(s)=1$ for $s\geq 0$. \end{lemma}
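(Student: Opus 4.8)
The plan is to deduce $C^\infty_{\loc}$-convergence of the sequence $\tilde u^k=\tilde u^k_{n,T_k}$ by combining the uniform derivative bounds of proposition \ref{no-bubbling} with the decay of the normal component from proposition \ref{step1}, in an Arzel\`a--Ascoli argument adapted to the noncompact target $\IP(\IH)$. The one point that is genuinely new compared with the finite-dimensional situation is that equicontinuity alone no longer yields precompactness, since closed balls in $\IP(\IH)$ are not compact; proposition \ref{step1} is precisely what closes this gap.

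First I would record the two elementary facts that make the limiting equation come out right. Since $T_k=k\to\infty$ and the family $\varphi_T$ is chosen so that $\varphi_T$ agrees with the fixed cut-off $\varphi$ on $(-\infty,2T]$, on every compact $K\subset\IR^2$ we have $\varphi_{T_k}\equiv\varphi$ near $K$ once $k$ is large. And by lemma \ref{approx} together with lemma \ref{exp}, $\nabla G^k\to\nabla G$ with all derivatives, uniformly on bounded subsets of $\IH$; here one also uses that the perturbations $\nu^k_t$ were chosen to decay exponentially in $k$. Consequently, once $C^\infty_{\loc}$-convergence $\tilde u^k\to\tilde u$ has been established, passing to the limit in $0=\CR\tilde u^k+\varphi_{T_k}(s)\cdot\nabla G^k_t(\tilde u^k)$ gives $0=\CR\tilde u+\varphi(s)\cdot\nabla G_t(\tilde u)$, while $\tilde u^k(\cdot,t+1)=\phi^0_{-1}(\tilde u^k(\cdot,t))$ passes to the limit by continuity of $\phi^0_{-1}$; smoothness of $\tilde u$ is automatic since the limit is taken with all derivatives, all of which are locally uniformly bounded.

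The heart of the argument is the compactness step. Fix $z\in\IR^2$ and an arbitrary subsequence; I claim $\{\tilde u^k(z)\}$, and likewise each of its derivatives, has a convergent sub-subsequence in $\IP(\IH)$. By proposition \ref{step1} there is $\ell_0$ so that for all $k\ge\ell\ge\ell_0$ the curve $\tilde u^k$ lies in $\CP^{2k}\backslash D_{k,\ell}$ and splits as $\tilde u^k=(\tilde u^{k,\ell}_\perp,\tilde u^{k,\ell})$ with $\tilde u^{k,\ell}:\IR^2\to\CP^{2\ell}$ and $\sup_{k\ge\ell}\|\tilde u^{k,\ell}_\perp\|_{C^\alpha}\le\eps_\alpha(\ell)$, where $\eps_\alpha(\ell)\to 0$ (faster than any power of $\ell^{-1}$) as $\ell\to\infty$. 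For each fixed $\ell$ the tangential parts $\tilde u^{k,\ell}$ take values in the compact manifold $\CP^{2\ell}$ and, by proposition \ref{no-bubbling}, have $k$-uniformly bounded $C^\alpha$-norms, so Arzel\`a--Ascoli in charts of $\CP^{2\ell}$, iterated over $\alpha$, produces a subsequence with $\tilde u^{k,\ell}\to\tilde u^{(\ell)}$ in $C^\infty_{\loc}(\IR^2,\CP^{2\ell})$. Diagonalizing over $\ell=\ell_0,\ell_0+1,\dots$ gives a single subsequence along which this holds for all $\ell$ at once. Since for $\ell'\ge\ell$ the bundle projection carries $\tilde u^{k,\ell'}$ to $\tilde u^{k,\ell}$ with error controlled by $\tilde u^{k,\ell}_\perp$, one obtains $d_{C^\alpha(K)}(\tilde u^{(\ell')},\tilde u^{(\ell)})\le C\eps_\alpha(\ell)$ on compacts, so $(\tilde u^{(\ell)})_\ell$ is Cauchy in $C^\infty_{\loc}(\IR^2,\IP(\IH))$ and converges to some smooth $\tilde u$. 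Along the diagonal subsequence one then estimates, on each compact $K$ and for each $\alpha$,
\[
d_{C^\alpha(K)}(\tilde u^k,\tilde u)\ \le\ C\,\|\tilde u^{k,\ell}_\perp\|_{C^\alpha(K)}\;+\;d_{C^\alpha(K)}(\tilde u^{k,\ell},\tilde u^{(\ell)})\;+\;d_{C^\alpha(K)}(\tilde u^{(\ell)},\tilde u);
\]
the first term is $\le C\eps_\alpha(\ell)$ and the third term tends to $0$ as $\ell\to\infty$, both independently of $k$, while the middle term tends to $0$ as $k\to\infty$ for each fixed $\ell$. Letting first $k\to\infty$ and then $\ell\to\infty$ proves $\tilde u^k\to\tilde u$ in $C^\infty_{\loc}$.

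Putting the two steps together shows that $\tilde u:\IR\times\IR\to\IP(\IH)$ is a smooth map satisfying the periodicity relation and the Floer equation with the stated cut-off $\varphi$, as required. The main obstacle is exactly the compactness step: without the polynomial lower bound on the eigenvalues of $A=-i\del_t$ provided by the diophantine estimate --- which forces the normal component to decay faster than any power of $\ell^{-1}$, uniformly over the moduli spaces $\IM^{k,n}$, $k\ge\ell$ --- nothing would prevent the values $\tilde u^k(z)$ from drifting off to infinity in the high-frequency directions of $\IP(\IH)$, and the three-term splitting above could not be closed. Everything else is a routine diagonal Arzel\`a--Ascoli argument together with the uniform convergence $\nabla G^k\to\nabla G$.
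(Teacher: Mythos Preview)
Your proof is correct and follows essentially the same approach as the paper: diagonalize over $\ell$ to extract a subsequence along which the tangential projections $\tilde{u}^{k,\ell}\to\tilde{u}^{(\ell)}$ in $C^\infty_{\loc}$ for all $\ell$ simultaneously, then combine this with the uniform decay of the normal component from proposition \ref{step1} in a three-term splitting. The paper's version is marginally more economical in that it shows directly that $(\tilde u^k)$ is Cauchy via $\|\tilde u^k-\tilde u^{k'}\|\le\|\tilde u^{k,\ell}_\perp\|+\|\tilde u^{k,\ell}-\tilde u^{k',\ell}\|+\|\tilde u^{k',\ell}_\perp\|$ rather than first constructing the limit as $\lim_\ell\tilde u^{(\ell)}$, but this is a cosmetic difference.
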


\begin{proof} By using a diagonal sequence argument we know that, after passing to a subsequence, we may assume that the sequence of maps $\tilde{u}^{k,\ell}=\pi_\ell\circ\tilde{u}^k:\IR\times\IR\to\CP^{2\ell}$ converges in the $C^{\infty}_{\loc}$-sense to a smooth map $\tilde{u}^{\ell}: \IR\times\IR\to\CP^{2\ell}$ as $k\to\infty$ for all $\ell\geq n$ \emph{simultaneously}. Indeed we have already shown in chapter $6$ that we locally have bounded $H^{\alpha,p}$-norms for all $\alpha\in\IN$ and hence, after passing to a diagonal subsequence, local $H^{\alpha,p}$-convergence for all $\alpha\in\IN$. Note that here we crucially make use of the fact that, for fixed $\ell$, the maps $\tilde{u}^{k,\ell}$ have image in the same \emph{finite}-dimensional manifold $\CP^{2\ell}$ as $k\to\infty$.  \\

Now fix $\alpha\in\IN$ and restrict all maps to a given bounded open subset. For given $\eps>0$, we find $\ell\geq\ell_0\geq n$ such that $\sup_{k\geq\ell}\|\tilde{u}^{k,\ell}_{\perp}\|_{C^{\alpha}}<\eps/3$. Since $\tilde{u}^{k,\ell}\to\tilde{u}^{\ell}$ for this given $\ell$, we find $k_0\geq\ell$ such that $\|\tilde{u}^{k,\ell}-\tilde{u}^{k',\ell}\|_{C^{\alpha}}<\eps/3$ for all $k,k'\geq k_0$. But together this gives $$\|\tilde{u}^k-\tilde{u}^{k'}\|_{C^{\alpha}}\,\leq\,\|\tilde{u}^{k,\ell}_{\perp}\|_{C^{\alpha}} \;+\;\|\tilde{u}^{k,\ell}-\tilde{u}^{k',\ell}\|_{C^{\alpha}} \;+\;\|\tilde{u}^{k',\ell}_{\perp}\|_{C^{\alpha}}\,<\,\eps.$$ Here we assume without loss of generality that, after restricting to the bounded open subset, the Floer curve lies in the coordinate neighborhood around $u^0_n$. \end{proof}       

With this we can now finish the proof of the main theorem.

\begin{proof}\emph{(of the main theorem)} For every $n\in\IN$ it remains to be shown that the Floer curve $\tilde{u}=\tilde{u}_n:\IR\times\IR\to\IP(\IH)$ connects the fixed point $u^0_n$ of the free Schr\"odinger equation with a fixed point $u^1_n$ of $\phi_1$ and that $u^1_n\neq u^1_m$ for $m\neq n$. Fixing $n\in\IN$, for the first statement we show that there exist sequences $(s_{\gamma}^{\pm})$ of positive and negative real numbers, $s_{\gamma}^{\pm}\to\pm\infty$ with $\tilde{u}_n(s_{\gamma}^+,\cdot)\to u^1_n$ and $\tilde{u}_n(s_{\gamma}^-,\cdot)\to u^0_n$ as $\gamma\to\infty$. In order to see the latter, note that as in the finite-dimensional case, see the proof of proposition \ref{finite-dim}, for every $\gamma\in\IN$ there exists $\gamma\leq |s_{\gamma}^{\pm}|\leq 2\gamma$ such that $$\int_0^1 |\del_t\tilde{u}(s_{\gamma}^{\pm},t) - \varphi(s_{\gamma}^{\pm}) X^G_t(\tilde{u}(s_{\gamma}^{\pm},t))|^2\;dt\,<\,\frac{\pi}{\gamma}.$$ In order to show that a subsequence converges, note that we can again write the Floer map as a tuple, $$\tilde{u}=(\tilde{u}^{\ell}_{\perp},\tilde{u}^{\ell}): \IR\times\IR\to (\IH/\IC^{2\ell})\times\CP^{2\ell},$$ where $\|\tilde{u}^{\ell}_{\perp}\|_{\infty}\to 0$ as $\ell\to\infty$ due to proposition \ref{step1}. By compactness of $\CP^{2\ell}$ we know, possibly after passing to a subsequence, that for every $\ell\geq n$ the sequences $\tilde{u}^{\ell}(s_{\gamma}^{\pm},0)$ converge. Together with $\tilde{u}^{\ell}_{\perp}(s_{\gamma}^{\pm},0)\to 0$ as $\ell\to\infty$ this now proves that $\tilde{u}(s_{\gamma}^+,0)$ converges to a fixed point $u^1_n$ of $\phi_1$ and $\tilde{u}(s_{\gamma}^-,0)$ converges to the fixed point $u^0_n$ of $\phi^0_1$. In order to see that $\tilde{u}(s_{\gamma}^-,\cdot)$ indeed converges to the fixed point $u^0_m$ of $\phi^0_1$ with $m=n$, recall that breaking-off of cylinders for the free Hamiltonian $H^0$ can be excluded by index reasons. \\

In order to see that the resulting fixed points $u^1_m$, $u^1_n$ for $H_t=H^0+F_t$ are different if $m\neq n$ as well, as in the finite-dimensional case it suffices to refer to Floer's proof in \cite{Fl} of the degenerate Arnold conjecture for $\CP^{2k}$ using the cup action on Floer cohomology for finite-dimensional Hamiltonians. Since Floer cohomology can only be defined for Hamiltonians with nondegenerate one-periodic orbits, we again consider the sequence of perturbed finite-dimensional Hamiltonians $H^k_t=H^0+F^k_t$ with $H^k_t\to H_t$. For the rest the argument is completely similar to the one sketched in the proof of proposition \ref{finite-dim}, see \cite{Fl} for further details.  \end{proof}

Finally, we show how our findings imply the existence of infinitely many different strong solutions of the original Hamiltonian PDE.

\begin{proof}\emph{(of the proposition \ref{strong})} Recall from the proof of proposition \ref{step1} that for the sequence of Floer curves $\tilde{u}^k:\IR\times\IR\to\CP^{2k}$ converging to the Floer curve $\tilde{u}:\IR\times\IR\to\IP(\IH)$ we know that for all $\ell\geq n$ sufficiently large and all $\alpha,\delta\in\IN$ we have $\sup_{k\geq\ell} \|\tilde{u}^{k,\ell}_{\perp}\|_{C^\alpha}\cdot\ell^{\delta}\to 0$ as $\ell\to\infty$. It follows that the corresponding decay rate continues to hold for the limiting curve $\tilde{u}$, that is, for all $\alpha,\delta\in\IN$ we have $\sup_{k\geq\ell} \|\tilde{u}^{\ell}_{\perp}\|_{C^\alpha}\cdot\ell^{\delta}\to 0$ as $\ell\to\infty$. This implies that $\tilde{u}$ and hence also $u^1_n$ sit in $\IP(C^{\infty}(S^1,\IC))\subset\IP(L^2(S^1,\IC))$ and it follows for all $\alpha,\delta\in\IN$ that $\widehat{u^1_n}(m,p)\cdot |m|^\delta\cdot |p|^\alpha\to 0$ as $|m|,|p|\to\infty$ which in turn is equivalent to the fact that every point $(t,x)\in\IR^2$ all partial derivatives $\del_x^{\delta}\del_t^{\alpha} u^1_n(t,x)$, $(\alpha,\delta)\in\IN^2$ exist. \\

In order to see that the path $u^1_n:\IR\to\IP(L^2(S^1,\IC))$ provides us with a strong solution of the nonlinear Schr\"odinger equation of convolution type as stated in the corollary, we just need to choose a lift of $u^1_n$ to a path in $\IS(L^2(S^1,\IC))\subset L^2(S^1,\IC)$ which is a weak solution of the nonlinear Schr\"odinger equation. This provides us with a map $u:\IR^2\to\IC$ which automatically satisfies the periodicity condition $|u(t+1,x)|=|u(t,x)|=|u(t,x+2\pi)|$ for all $(t,x)\in\IR^2$. In order to prove that $u$ is a strong solution of the nonlinear Schr\"odinger equation, it suffices to observe that $u$ is smooth. \end{proof}

\end{document}